\newtheorem{defn}{Definition}
\newtheorem{thm}{Theorem}
\newtheorem{lem}{Lemma}
\newtheorem{exmp}{Example}
\newtheorem{cor}{Corollary}
\newdefinition{rmk}{Remark}
\newcommand{\defeats}{\mathrel{\text{def}}}  
\newcommand{\A}{\mathcal{A}}                
\newcommand{\R}{\mathbb{R}}                 
\newcommand{\T}{\mathbb{T}}                 
\newcommand{\defset}{\mathbf{def}}      
\newcommand{\accset}{\mathbf{acc}}      
\journal{}
\begin{document}

\begin{frontmatter}



\title{Encoding higher-order argumentation frameworks with supports to propositional logic systems}


\author[a]{Shuai Tang\corref{cor1}} 
\ead{TangShuaiMath@outlook.com}

\cortext[cor1]{Corresponding author}

\begin{abstract}
	Argumentation frameworks ($AF$s) have been extensively developed, but existing higher-order bipolar $AF$s suffer from critical limitations: attackers and supporters are restricted to arguments, multi-valued and fuzzy semantics lack unified generalization, and encodings often rely on complex logics with poor interoperability. To address these gaps, this paper proposes a higher-order argumentation framework with supports ($HAFS$), which explicitly allows attacks and supports to act as both targets and sources of interactions. We define a suite of semantics for $HAFS$s, including extension-based semantics, adjacent complete labelling semantics (a 3-valued semantics), and numerical equational semantics ([0,1]-valued semantics). Furthermore, we develop a normal encoding methodology to translate $HAFS$s into propositional logic systems ($\mathcal{PLS}$s): $HAFS$s under complete labelling semantics are encoded into Łukasiewicz's three-valued propositional logic ($\mathcal{PL}_3^L$), and those under equational semantics are encoded into fuzzy $\mathcal{PLS}$s ($\mathcal{PL}_{[0,1]}$) such as Gödel and Product fuzzy logics. We prove model equivalence between $HAFS$s and their encoded logical formulas, establishing the logical foundation of $HAFS$ semantics. Additionally, we investigate the relationships between 3-valued complete semantics and fuzzy equational semantics, showing that models of fuzzy encoded semantics can be transformed into complete semantics models via ternarization, and vice versa for specific t-norms. This work advances the formalization and logical encoding of higher-order bipolar argumentation, enabling seamless integration with lightweight computational solvers and uniform handling of uncertainty.
\end{abstract}



\begin{keyword}


higher-order argumentation framework\sep support relation; extension semantics\sep labelling semantics\sep equational semantics\sep encoded semantics\sep fuzzy propositional logic

\MSC 68T27 \sep 03B70 \sep 03B50
\end{keyword}

\end{frontmatter}



\section{Introduction}
Since Dung's argumentation frameworks ($DAF$s) \cite{dung1995acceptability} were presented, argumentation frameworks ($AF$s) have been developed significantly. Syntactically, higher-order $AF$s ($HOAF$s) \cite{barringer2005temporal, gabbay2009semantics, baroni2009encompassing, baroni2011afra}, bipolar $AF$s \cite{karacapilidis2001computer, verheij2003deflog, cayrol2005acceptability}, higher-order bipolar $AF$s \cite{boella2010support, cohen2015approach, gottifredi2018characterizing, alfano2024credulous, alfano2024cyclic}, frameworks with sets of attacking arguments ($SETAF$s) \cite{nielsen2006generalization, flouris2019comprehensive}, higher-order-set bipolar $AF$s \cite{cayrol2018structure-based,cayrol2018structure,cayrol2018argumentation} and higher-order set $AF$s \cite{tang2025encodingarg} (which are not specific cases of the higher-order-set bipolar $AF$s introduced in \cite{cayrol2018structure-based,cayrol2018structure,cayrol2018argumentation}) have been proposed. Semantically, the labelling semantics \cite{caminada2006issue} advances Dung's extension semantics \cite{dung1995acceptability}. Numerical labelling semantics have been introduced from 3-valued semantics \cite{gabbay2016degrees} to [0,1]-valued semantics, such as equational semantics \cite{gabbay2011introducing, gabbay2012equational} and gradual semantics \cite{barringer2005temporal, cayrol2005graduality, gabbay2015equilibrium, amgoud2022evaluation, amgoud2024higher}.

As a reasoning approach on conflict information, $AF$s are closely connected with logic systems. The research in \cite{besnard2004checking} bridges $DAF$s and the 2-valued propositional logic system. 
An encoding method utilizing quantified Boolean formulas (QBF) is presented in \cite{egly2006reasoning, arieli2013qbf}. The study \cite{egly2006reasoning} extends $DAF$s into frameworks built upon a deductive system's induced derivability relation, proposing a general encoding function for this relation; the encoding technique from \cite{besnard2004checking} is then shown to be a specific instance of this general scheme. Meanwhile, \cite{arieli2013qbf} devises a QBF-based encoding linked to a modified Kleene three-valued logic through the signed theory generated by a $DAF$. Connections between modal logics and $DAF$s are explored in \cite{gabbay2009modal,caminada2009logical,grossi2010logic,villata2012logic, gabbay2015attack}. Furthermore, relations between $DAF$s and intuitionistic logic systems are established in \cite{gabbay2016attack, fandinno2018constructive}.
Integration of $DAF$s with fuzzy set theory via extension-based methods is investigated in \cite{janssen2008fuzzy, wu2016godel, wu2016some, wang2020dynamics, wu2020godel}. Alternatively, labelling-based approaches combined with iterative algorithms are employed to combine $DAF$s with fuzzy set theory in \cite{pereira2011changing, zhao2022efficient}.
Although \cite{dyrkolbotn2014formal} demonstrates a meta-logical equivalence between skeptical reasoning under argumentation semantics and {\L}ukasiewicz's three-valued logic ($\mathcal{PL}_3^L$), it does not furnish an actual encoding from argumentation frameworks into $\mathcal{PL}_3^L$ theories.
A unified adaptive logic framework is introduced in \cite{strasser2014adaptive}, which encodes argumentation frameworks into logical premises and uses a dynamic proof system to model all standard semantics uniformly for both skeptical and credulous reasoning.
The research in \cite{tang2025encoding} links $DAF$s under complete semantics to three-valued propositional logics ($\mathcal{PL}_3$s) and connects $DAF$s under equational semantics to fuzzy propositional logics ($\mathcal{PL}_{[0,1]}$s) by encoding $DAF$s into these logical systems. From the perspective of abstract dialectical frameworks ($ADF$s) based on classical binary propositional logic ($\mathcal{PL}_2$), the expressiveness of logical encodings for $SETAF$s is examined in \cite{dvovrak2023expressiveness}. Several works address logical encodings involving first-order logic for higher-order and bipolar $AF$s \cite{cayrol2017logical,claudette2018logical,cayrol2020logical, lagasquie2021evidential, lagasquie2021necessary, lagasquie2023handling,besnard2023generic}. The method proposed in \cite{tang2025encoding} is extended to higher-hierarchy $AF$s in \cite{tang2025encodingarg}.

Nevertheless, current studies concerning the logical encoding of argumentation frameworks continue to encounter notable deficiencies and obstacles that impede both their practical utility and theoretical coherence. Firstly, the predominant encoding techniques for Argumentation Frameworks ($AF$s) featuring higher-order constructs or collective attackers are predominantly grounded in first-order logic, modal logic, or second-order propositional logic ($\mathcal{PL}_2$). Such techniques frequently incur substantial syntactic overhead—exemplified by quantifiers and intricate predicates in first-order logic—which hinders seamless interoperability with lightweight computational solvers. Alternatively, they exhibit inherent limitations in representing uncertainty and graded truth values; for instance, $\mathcal{PL}_2$ lacks direct mechanisms for capturing fuzzy degrees of acceptability.
Secondly, although prior works have introduced structural formulations for higher-order bipolar $AF$s, a pivotal shortcoming persists: the constituents eligible to act as attackers or supporters are confined solely to arguments. From a standpoint of theoretical uniformity and real-world applicability, attacks and supports are themselves entities susceptible to being targeted and reinforced, much like arguments. Consequently, they should possess the analogous capacity to originate attacks and supports. Existing formalisms are thus incapable of modeling situations in which attacks and supports themselves constitute attackers or supporters, thereby limiting their applicability to a broad spectrum of intricate dialectical exchanges. Thirdly, extant research offers underdeveloped pathways for generalizing between multi‑valued and fuzzy semantics. Many encodings terminate at three‑valued semantics without progressing to continuous fuzzy interpretations, or necessitate substantial structural alterations when shifting across semantic paradigms, thereby precluding a smooth and uniform extension. 
In light of these constraints, the present paper is motivated by three principal objectives, seeking to bridge the identified theoretical voids and meet practical necessities:
\begin{itemize}
	\item To advance the $HOAFN$ into a \emph{higher‑order argumentation framework with supporters} ($HAFS$), thereby remedying a key limitation of existing higher‑order bipolar structures: the constraint that attackers and supporters must be arguments, to the exclusion of attacks and supports. The proposed $HAFS$ explicitly permits attacks and supports to serve as both targets and sources of attack and support.
	\item To introduce a coherent and expressive semantic suite for $HAFS$s, encompassing extension‑based semantics, complete labelling semantics, and numerical equational semantics. Whereas existing semantics for higher‑order bipolar $AF$s are largely confined to discrete labels (e.g., \textit{in}, \textit{out}, \textit{undecided}), the semantics proposed herein establish a unified underpinning that naturally integrates discrete three‑valued reasoning with continuous fuzzy reasoning.
	\item To devise a tailored encoding methodology and a formal equational framework for $HAFS$s, creating a bridge to propositional logic systems ($\mathcal{PLS}$s).
\end{itemize}
This study advances the encoding methodology from $AF$s with only attacks \cite{tang2025encoding,tang2025encodingarg} to higher-order argumentation frameworks with supporters ($HAFS$s). It establishes the logical foundations for $HAFS$s by proving the model equivalence between an $HAFS$ and its encoded logical formula, and introduces a formal equational approach. In summary, the core contributions of this paper are threefold:
\begin{itemize}
	\item \textbf{Syntactic advancement}: We propose the $HAFS$ as a syntactic extension of the $HOAFN$, which enables mutual interactions among attacks and supports (both being targeted and targeting others).
	\item \textbf{Semantic expansion}: We define extension-based semantics and complete labelling semantics for $HAFS$s, and further design a numerical equational semantics for such frameworks.
	\item \textbf{Encoding development}: We provide translations from $HAFS$s under complete semantics into $\mathcal{PL}_3^L$, and from $HAFS$s under equational semantics into $\mathcal{PL}_{[0,1]}$ systems, while also exploring the relationship between general real-valued equational semantics and continuous fuzzy normal encoded equational semantics.
\end{itemize}

\section{Preliminaries}
\subsection{The syntax and semantics of $HOAFN$s}
	\begin{defn}[Higher-Order $AF$ with Necessities ($HOAFN$)]\cite{alfano2024credulous}
		A higher-order argumentation framework with necessities ($HOAFN$) is a tuple 
		$\langle \mathcal{A}, \mathbb{R},$ $\mathbb{T} \rangle$, where 
		$\mathcal{A}$ is a finite set of arguments, 
		$\mathbb{R}$ is a finite set of attacks, 
		$\mathbb{T}$ is a finite set of (necessary) supports, and 
		$\mathbb{R} \cup \mathbb{T} \subseteq \mathcal{A} \times (\mathcal{A} \cup \mathbb{R} \cup \mathbb{T})$.
	\end{defn}

\begin{defn}[$ASAF$ defeats]\cite{alfano2024credulous}
	Let $\langle \A, \R, \T \rangle$ be an $ASAF$, $\vartheta \in (\A \cup \R \cup \T)$, $\alpha \in \R$, and $S \subseteq (\A \cup \R \cup \T)$. We say that $\alpha$ defeats $\vartheta$ given $S$ (denoted as $\alpha \defeats_S \vartheta$) if either: 
	\begin{enumerate}
		\item $t(\alpha) = \vartheta$, 
		\item $\vartheta \in \R$ and $t(\alpha) = s(\vartheta)$, 
		\item there exists $b \in \A$ such that $t(\alpha) = b$ and $(b, \vartheta) \in (\T \cap S)^+$, or 
		\item $\vartheta \in \R$ and there exists $b \in \A$ such that $t(\alpha) = b$ and $(b, s(\vartheta)) \in (\T \cap S)^+$.
	\end{enumerate}
\end{defn}

\begin{defn}[$ASAF$ defeated and acceptable sets]\cite{alfano2024credulous}
	Let $\langle \A, \R, \T \rangle$ be an $ASAF$ and $S \subseteq (\A \cup \R \cup \T)$. We define:
	\begin{itemize}
		\item $\mathbf{def}(S) = \left\{ \vartheta \in (\A \cup \R \cup \T) \mid \exists \alpha \in (\R \cap S): \alpha \defeats_S \vartheta \right\}$.  
		\item $\mathbf{acc}(S) = \left\{ \vartheta \in (\A \cup \R \cup \T) \mid \forall \alpha \in \R: \alpha \defeats_S \vartheta \implies \alpha \in \mathbf{def}(S) \right\}$.
	\end{itemize}
\end{defn}

\begin{defn}[$RAFN$ defeats]\cite{alfano2024credulous}
	Let $\langle \A, \R, \T \rangle$ be a $RAFN$, $\vartheta \in (\A \cup \R \cup \T)$, $a \in \A$, and $S \subseteq (\A \cup \R \cup \T)$. We say that argument $a$ defeats $\vartheta$ given $S$ (denoted as $a \defeats_S \vartheta$) if either: 
	\begin{enumerate}
		\item $(a, \vartheta) \in (\R \cap S)$, or 
		\item there exists $b \in \A$ such that $(a, b) \in (\R \cap S)$ and $(b, \vartheta) \in (\T \cap S)^+$.
	\end{enumerate}
\end{defn}

\begin{defn}[$RAFN$ defeated and acceptable sets]\cite{alfano2024credulous}
	Let $\langle \A, \R, \T \rangle$ be a $RAFN$ and $S \subseteq (\A \cup \R \cup \T)$. We define:
	\begin{itemize}
		\item $\mathbf{def}(S) = \left\{ \vartheta \in (\A \cup \R \cup \T) \mid \exists b \in \A \cap S: b \defeats_S \vartheta \right\}$;  
		\item $\mathbf{acc}(S) = \left\{ \vartheta \in (\A \cup \R \cup \T) \mid \forall b \in \A: b \defeats_S \vartheta \implies b \in \mathbf{def}(S) \right\}$.
	\end{itemize}
\end{defn}

From the above definitions and logical considerations, if $\nexists b \in \A$ s.t. $\ b \defeats_S \vartheta$ then $\vartheta\in \mathbf{acc}(S)$.

\begin{defn}[$HOAFN$ semantic notions and extensions]\cite{alfano2024credulous}
	Let $\Delta = \langle \A, \R, \T \rangle$ be a $HOAFN$ and $S \subseteq (\A \cup \R \cup \T)$. We say that $S$ is:
	\begin{itemize}
		\item \emph{conflict-free} iff $S \cap \defset(S) = \emptyset$;  
		\item \emph{admissible} iff it is conflict-free and $S \subseteq \accset(S)$;  
		\item \emph{a complete extension of $\Delta$} iff it is conflict-free and $S = \accset(S)$;  
		\item \emph{a preferred extension of $\Delta$} iff it is a maximal (w.r.t. $\subseteq$) complete extension;  
		\item \emph{a stable extension of $\Delta$} iff it is a preferred extension such that $S \cup \defset(S) = (\A \cup \R \cup \T)$;  
		\item \emph{the grounded extension of $\Delta$} iff it is the smallest (w.r.t. $\subseteq$) complete extension.  
	\end{itemize}
\end{defn}

\begin{defn}[extension-based labellings]\cite{alfano2024credulous}
	Let $\Delta = \langle \A, \R, \T \rangle$ be an $HOAFN$ and $E$ an $\sigma$-extension of $\Delta$, with $\sigma \in \{co, gr, st, pr\}$. A labeling of $\Delta$ under semantics $\sigma$ is a function $L_E : (\A \cup \R \cup \T) \to \{ \mathrm{in}, \mathrm{out}, \mathrm{undec}\}$ such that for any $\vartheta \in (\A \cup \R \cup \T)$ it holds that:
	\begin{enumerate}
		\item $L_E(\vartheta) = \mathrm{in}$ iff $\vartheta \in \accset(E)$,  
		\item $L_E(\vartheta) = \mathrm{out}$ iff $\vartheta \in \defset(E)$, and  
		\item $L_E(\vartheta) = \mathrm{undec}$ iff $\vartheta \in (\A \cup \R \cup \T) \setminus (\accset(E) \cup \defset(E))$.  
	\end{enumerate}
\end{defn}

\subsection{Propositional logic systems}
Some essential knowledge of propositional logic systems is given in \cite{tang2025encoding,tang2025encodingarg}. For in-depth knowledge of $\mathcal{PLS}$s, interested readers may consult \cite{klir1995fuzzy,Hajek1998,Klement2000Triangular, bergmann2008introduction, belohlavek2017fuzzy}. In this subsection, we only present some foundational operations.

For the meta-language of this paper, we denote ``$A$ if and only if $B$'' by ``$A \Longleftrightarrow B$'' and ``if $B$ then $A$'' by ``$B \Longrightarrow A$'', where $A$ and $B$ are sentences. 

A (formal) \textbf{language} $\mathcal{L}$ of a $\mathcal{PLS}$ is determined by the following components:
	
	\begin{enumerate}
		\item \textbf{Alphabet:} The alphabet of $\mathcal{L}$ consists of:
		\begin{itemize}
			\item A countable set $\text{Prop} = \{p_1, p_2, p_3, \dots\}$ of \textit{propositional variables}.
			\item The set of \textit{logical connectives}: $\{\neg, \wedge, \vee, \rightarrow, \leftrightarrow\}$.
			\begin{itemize}
				\item $\neg$ is a unary connective (negation).
				\item $\wedge, \vee, \rightarrow, \leftrightarrow$ are binary connectives (conjunction, disjunction, implication, equivalence).
			\end{itemize}
			\item The auxiliary symbols: parentheses “(” and “)”.
		\end{itemize}
		
		\item \textbf{Syntax (Formation Rules):} The set $\text{Form}(\mathcal{L})$ of \textit{well-formed formulas} (wffs) is the smallest set such that:
		\begin{itemize}
			\item If $p \in \text{Prop}$, then $p \in \text{Form}(\mathcal{L})$.
			\item If $\varphi \in \text{Form}(\mathcal{L})$, then $(\neg \varphi) \in \text{Form}(\mathcal{L})$.
			\item If $\varphi, \psi \in \text{Form}(\mathcal{L})$ and $\circ \in \{\wedge, \vee, \rightarrow, \leftrightarrow\}$, then $(\varphi \circ \psi) \in \text{Form}(\mathcal{L})$.
		\end{itemize}
		
		\item \textbf{Additional Conventions:}
		\begin{itemize}
			\item Outer parentheses are often omitted.
			\item The precedence order of connectives (from highest to lowest) is: $\neg$, $\wedge$, $\vee$, $\rightarrow$, $\leftrightarrow$.
			\item The symbols $\top$ (tautology) and $\bot$ (contradiction) may be added as atomic formulas, or defined as $p \vee \neg p$ and $p \wedge \neg p$, respectively.
		\end{itemize}
	\end{enumerate}
	
	In this paper, we only consider any $\mathcal{PLS}$ that has numerical assignment domains (subsets of $[0,1]$) and their corresponding numerical semantic systems, i.e., the semantics of a $\mathcal{PLS}$ is decided by the interpreted truth-functions (operators) of the connectives in the $\mathcal{PLS}$. We use $\|\cdot\|$ to represent an assignment or a model over the Prop or the evaluation of all formulas. In each $\mathcal{PLS}$ and any $\|\cdot\|$, we always have that $\|\bot\|=0$ and $a \leftrightarrow b \;:=\; (a \rightarrow b) \wedge (b \rightarrow a)$ for any formulas $a$ and $b$. 
	
In the $\mathcal{PL}_3^L$ \cite{Lukasiewicz1970ThreeValued}, the connectives $\neg, \wedge$ and $\vee$ are interpreted as:
\begin{itemize}
	\item $\|\neg a\|=1-\|a\|$
	\item $\|a\wedge b\|=\min\{\|a\|, \|b\|\}$
	\item $\|a\vee b\|=\max\{\|a\|, \|b\|\}$
\end{itemize}
where  $a, b$ are propositional formulas. 
The connective $\rightarrow$ is interpreted by the operator $\Rightarrow$ such that $\|a\rightarrow b\| = \|a\|\Rightarrow\|b\|$, where the truth-degree table for $\Rightarrow$ is given in Table~\ref{Rightarrow}.

\begin{table}[htbp]
	\centering
	\begin{tabular}{c | c c c}
		$\Rightarrow$ & 0 & $\frac12$ & 1 \\ 
		\hline
		0 & 1 & 1 & 1 \\
		$\frac12$ & $\frac12$ & 1 & 1 \\
		1 & 0 & $\frac12$ & 1
	\end{tabular}
	\caption{Truth‑degree table of $\Rightarrow$ in $\mathcal{PL}_3^L$.}
	\label{Rightarrow}
\end{table}
Because $a\leftrightarrow b \;:=\; (a\rightarrow b)\wedge(b\rightarrow a)$, we obtain
\begin{equation*}
	\|a\leftrightarrow b\| 
	= \min\{\|a\rightarrow b\|,\;\|b\rightarrow a\|\} 
	= \min\{\|a\|\Rightarrow\|b\|,\;\|b\|\Rightarrow\|a\|\}.
\end{equation*}
Hence $\leftrightarrow$ is interpreted by the operator $\Leftrightarrow$, where the truth-degree table for $\Leftrightarrow$ is shown in Table~\ref{Leftrightarrow}.
\begin{table}[htbp]
	\centering
	\begin{tabular}{c | c c c}
		$\Leftrightarrow$ & 0 & $\frac12$ & 1 \\ 
		\hline
		0 & 1 & $\frac12$ & 0 \\
		$\frac12$ & $\frac12$ & 1 & $\frac12$ \\
		1 & 0 & $\frac12$ & 1
	\end{tabular}
	\caption{Truth‑degree table of $\Leftrightarrow$ in $\mathcal{PL}_3^L$.}
	\label{Leftrightarrow}
\end{table}

Next, we review some important fuzzy operators in $\mathcal{PL}_{[0, 1]}$s \cite{klir1995fuzzy,Hajek1998, Klement2000Triangular}. The connectives $\neg, \wedge, \rightarrow$ are interpreted as a negation, a t-norm, and an implication, respectively.
A negation $N$ is called a \emph{standard negation} if $\forall m\in[0, 1]$: $N(m)=1-m$. The following are key continuous t-norms and their corresponding $R$-implications ($m, n\in[0, 1]$):
\begin{itemize}
	\item G\"{o}del t-norm: $T_G (m, n)=\min\{m, n\}$;
	\item R-implication of $T_G$: $I_G(m, n) = 
	\begin{cases}
		1 & \quad m\leq n \\
		n & \quad m>n 
	\end{cases};$
	\item {\L}ukasiewicz t-norm: $T_L (m, n)= \max\{0, m+n-1\}$;
	\item R-implication of $T_L$: $I_L(m, n) = \min\{1 - m +n, 1\};$
	\item Product t-norm: $T_P (m, n) = m\cdot n$;
	\item R-implication of $T_P$: $I_P(m, n)=\begin{cases}
		1 & \quad m\leq n \\
		\frac{n}{m} & \quad m>n
	\end{cases}$.
\end{itemize}
Note that for any R-implication $I$, we have that $I(m,n)=I(n,m)=1$ iff $m=n$, where $m,n\in[0,1]$. Therefore, $\|a\leftrightarrow b\|=1$ iff $\|a\leftarrow b\|=\|a\rightarrow b\|=1$ iff $\|a\|=\|b\|$, where $a$ and $b$ are propositional formulas.

In this paper, we use $\mathcal{PL}_{[0,1]}^G$ \cite{esteva2000residuated} to denote the $\mathcal{PLS}$ equipped with the standard negation,  G\"{o}del t-norm $T_G$ and R-implication $I_G$. Similarly, $\mathcal{PL}_{[0,1]}^P$ \cite{esteva2000residuated} denotes the $\mathcal{PLS}$ equipped with the standard negation,  Product t-norm $T_P$ and R-implication $I_P$, while $\mathcal{PL}_{[0,1]}^L$ \cite{esteva2000residuated} denotes the $\mathcal{PLS}$ equipped with the standard negation, {\L}ukasiewicz t-norm $T_L$ and R-implication $I_L$. 

\section{Motivations}
In the Preliminary Section, we have presented the general motivations. In this section, we will elaborate on the motivations of this paper.

We first analyze the syntactic limitations of current higher-order bipolar $AF$s. 
While $RAFN$s were introduced in \cite{cayrol2018structure-based,cayrol2018structure}, $REBAF$s in \cite{cayrol2018argumentation}, and $ASAF$s in \cite{cohen2015approach} (later developed in \cite{gottifredi2018characterizing} and summarized in \cite{alfano2024credulous}), 
all three frameworks share a common restriction: they do not permit attacks or supports themselves to be targeted by other relations. 
However, since attacks or supports can be targeted, attacks or supports should also have the ability to target others. Moreover, in practical argumentation scenarios, situations exist where attacks or supports do serve as targets. Thus, we propose the novel $HAFS$ whose syntax explicitly accommodates this case.

Then we analyze the semantic perspective. In \cite{alfano2024credulous}, the authors simplify the syntax of $RAFN$s \cite{cayrol2018structure-based,cayrol2018structure} as the same syntax of $ASAF$s and name the two higher-order bipolar $AF$s as $HOAFN$s. When $RAFN$s \cite{alfano2024credulous} and $ASAF$s are restricted as $AF$s that have only support relations, they have the same semantics. The difference between both frameworks results from the semantics on attack relations. For the $ASAF$, the most serious semantic shortcoming is that the value of an attack depends on the value of its source. If the value of an attack's source is 0, then the value of the attack must be 0. However, on one hand, since attacks represent logical attack-relationships between arguments (or attacks or supports), the truth of a logical attack-relationship will not depend on the truth of its source—just like in classical propositional logic, the validity of an implication proposition does not depend on the validity of its antecedent. On the other hand, we encounter many situations where the source of an attack is invalid, yet the attack itself is valid. 
Thus, the value of an attack should not depend on the value of its source. This semantic independence-feature is characterized by the $RAFN$s \cite{alfano2024credulous}. In fact, when we restrict a $RAFN$ \cite{alfano2024credulous} as an $AF$ only with the attack relation, we obtain an $HLAF$ \cite{gabbay2009semantics} and its associated semantics. Therefore, in this paper we tend to choose the semantics of $RAFN$s \cite{alfano2024credulous} to generalize it as the semantics of $HAFS$s.

Another motivation stems from the treatment of support cycles in higher-order bipolar $AF$s. In \cite{alfano2024credulous}, the authors avoid discussing the $HOAFN$s with support cycles. While the authors in \cite{alfano2024cyclic} introduce a semantics for higher-order bipolar $AF$s that admits support cycles, their approach deems an argument (or an attack or a support) unacceptable whenever it participates in a valid support cycle. In our view, extensions that include such an element should remain possible, as it may well be justified. Therefore, we aim to develop a novel semantics for higher-order bipolar $AF$s with support cycles, aligning with this perspective. 

Numerical $[0,1]$-valued semantics for higher-order or set $AF$s are developed in \cite{amgoud2024higher, yun2020ranking, yun2021gradual, tang2025encodingarg},
but the numerical semantics for higher-order bipolar $AF$s are absent.
Thus, this absence motivates us to develop the numerical semantics for $HAFS$s. We promote the encoding approach to develop the semantics of $HAFS$s, since this approach connects $AF$s and logic systems, which provides logical foundations for argumentation semantics and avoid arbitrary diverse semantic definitions.

\section{The syntax and basic semantics of $HAFS$s}
In this section, we present the syntax and basic semantics of $HAFS$s. First, we formalize the syntax of $HAFS$s. We then introduce the adjacent complete labelling semantics --- the truth degree of each argument (or attack or support) is decided only by the truth degrees of its adjacent elements --- as a foundational and concise semantics for $HAFS$s. To situate $HAFS$s within traditional argumentation semantic paradigms, we further develop its extension-based semantics and define the extension-derived complete labelling. Then, we explore the inherent relationships between adjacent complete labellings and extension-derived complete labellings.

\subsection{Syntax of $HAFS$s}
\begin{defn}
	A higher-order $AF$ with supports ($HAFS$) is a tuple 
	$(A, R, T,$ $U)$, where 
	$U=A \cup R \cup T$,
	$A$ is a finite set of arguments, 	
	$R\subseteq U \times U$ is a finite set of attacks,
	and $T\subseteq U \times U$ is a finite set of necessary supports.
	If $(a,b)\in R$, then we denote $(a,b)$ by $r_a^b$. If $(c,d)\in T$, then we denote $(c,d)$ by $t_c^d$.
\end{defn}
We say that $b$ attacks $a$ (or that $b$ is an attacker of $a$) iff $(b,a)\in R$, and we denote $(b, a)$ as $r_b^a$. We call that $c$ necessarily supports $a$ (or that $c$ is a necessary supporter of $a$) iff $(c,a)\in T$, and we denote $(c, a)$ as $t_c^a$.

\subsection{Adjacent complete labelling semantics}\label{labelling}
We first introduce the adjacent complete labelling of an $HAFS$.
\begin{defn}\label{adjacent}
	For an $HAFS=(A, R, T, U)$ and $\forall a\in U$, an adjacent complete labelling of the $HAFS$ is a total function $\|\cdot\|: U\to\{0,1,\frac{1}{2}\}$ such that
	\begin{itemize}
		\item 	$\|a\|=1$ iff [$\forall (b,a)\in R: \|b\|=0$ or $\|r_{b}^a\|=0$] and [$\forall (c,a)\in T: \|c\|=1$ or $\|t_{c}^a\|=0$];
		\item$\|a\|=0$ iff [$\exists (b,a)\in R: \|b\|=1$ and $\|r_{b}^a\|=1$] or [$\exists (c,a)\in T: \|c\|=0$ and $\|t_{c}^a\|=1$];
		\item $\|a\|= \frac{1}{2}$ iff otherwise.
	\end{itemize}
\end{defn}
Note that if $\nexists b((b,a)\in R)$ and $\nexists c((c,a)\in T)$ then the requirement for $\|a\|=1$ holds according to classical logic knowledge. We call the semantics ``adjacent complete labelling semantics'', since the value of an element in $U$ is decided only by its direct sources. Let us analyze the semantic feature characterized by this definition. For an element $a\in U$, if $a$ is validly attacked, i.e., it has a valid attacker and its associated valid attack, then $a$ is invalid; if $a$ is invalidly supported, i.e., it has an invalid supporter and its associated valid support, then $a$ is invalid; if $a$ is invalidly attacked and validly supported, then $a$ is valid; $a$ is undecided otherwise.

Next, we give the definition of the core and definitions of the grounded labelling, the preferred labelling and the stable labelling.
\begin{defn}
	For a labelling $L$ of an $HAFS=(A,R,T,U)$, the \emph{core} of $L$, denoted by $Core(L)$, is defined as the set $\{x\in U\mid L(x)=1\}$.
\end{defn}

\begin{defn}
	A labelling of an $HAFS=(A,R,T,U)$ is called
	\begin{itemize}
		 \item the \emph{adjacent grounded labelling} iff it is the adjacent complete labelling with the smallest (w.r.t. $\subseteq$) core; 
		\item an \emph{adjacent preferred labelling} iff it is an adjacent complete labelling with a maximal (w.r.t. $\subseteq$) core;  
		\item an \emph{adjacent stable labelling} $L_s$ iff $L_s$ is an adjacent preferred labelling such that $\forall y\in (U\setminus Core(L_s))$, $L_s(y)=0$.  
	\end{itemize}
\end{defn}
 
\subsection{Extension-based semantics for $HAFS$s}
We give the definitions of adjacent labelling semantics directly in Subsection \ref{labelling}. However, we want to establish traditional extension-based semantics for $HAFS$s and explore relationships with the adjacent complete labelling semantics.
\subsubsection{The extension-based semantic approach for $HAFS$s}
First, we present definitions related to support chains.
\begin{defn}
	Let $(A, R, T, U)$ be an $HAFS$.
	A \emph{support chain} of the $HAFS$ is a finite set $\{(f_1, f_2), (f_2, f_3), \dots, (f_{n-1}, f_n)\}\subseteq T$.
	An \emph{acyclic support chain} of the $HAFS$ is a support chain $\{(f_1, f_2), (f_2, f_3), \dots, (f_{n-1}, f_n)\}$ such that any two elements in $\{f_1, f_2, \dots, f_n\}$ are distinct.
	The $HAFS$ is \emph{support-acyclic} iff each support chain of the $HAFS$ is an acyclic support chain.
\end{defn}

Then, we introduce definitions related to defeats.
\begin{defn}\label{dft}
	Let $HAFS=(A, R, T, U)$, $B\subseteq U$, $a\in U$ and $b\in U$.
	We say that $b$ \emph{directly defeats} $a$ w.r.t. $B$, denoted as $b$ $\mathbf{a}$-$\mathbf{d}$ $a$ w.r.t. $B$, iff $b\in B$ and $(b,a)\in R\cap B$.
	We say that $b$ \emph{indirectly defeats} $a$ w.r.t. $B$ via $G$, denoted as $b$ $\mathbf{s}$-$\mathbf{d}$ $a$ w.r.t. $B$ via $G$, iff there exists an acyclic support chain $G=\{(g_n, g_{n-1}), (g_{n-1}, g_{n-2}), \dots, (g_2, g_1), (g_1, a)\}$ $(n\geqslant1)$ such that $G\subseteq T\cap B$, $(b, g_n)\in R\cap B$ and $b\in B$.
	We say that \emph{$b$ defeats $a$} w.r.t. $B$ iff $b$ $\mathbf{a}$-$\mathbf{d}$ $a$ w.r.t. $B$ or $b$ $\mathbf{s}$-$\mathbf{d}$ $a$ w.r.t. $B$.
	We say that \emph{$B$ defeats $a$} iff $\exists b\in B$ such that $b$ defeats $a$ w.r.t. $B$. 
	Let $\mathbf{dft}(B)$ denote the set $\{a\mid B \text{ defeats } a\}$.
\end{defn}
Note that the acyclic support chain is used for defining defeats but we do not require that the $HAFS$ in Definition \ref{dft} is support-acyclic.
\begin{lem}\label{lem1}
	Let $HAFS=(A, R, T, U)$, $B\subseteq U$, $a\in U$ and $b\in U$. $b$ $\mathbf{s}$-$\mathbf{d}$ $a$ w.r.t. $B$ via $G=\{(g_n, g_{n-1}), (g_{n-1}, g_{n-2}), \dots, (g_2, g_1), (g_1, a)\}$ $(n\geqslant1)$ iff $b$ defeats $g_1$ w.r.t. $B$ and $\{(g_1, a)\}\subseteq T\cap B$.
\end{lem}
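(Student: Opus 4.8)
The plan is to prove the biconditional by unfolding Definition~\ref{dft} at the last link $(g_1,a)$ of the support chain, treating the forward direction as a truncation of $G$ and the backward direction as an extension of a chain witnessing the defeat of $g_1$.

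For the forward direction, suppose $b$ $\mathbf{s}$-$\mathbf{d}$ $a$ w.r.t.\ $B$ via $G=\{(g_n,g_{n-1}),\dots,(g_2,g_1),(g_1,a)\}$. By Definition~\ref{dft} this already yields $G\subseteq T\cap B$, $(b,g_n)\in R\cap B$ and $b\in B$; in particular $\{(g_1,a)\}\subseteq T\cap B$, which is half of the right-hand side. For the other half I split on $n$. If $n=1$ then $g_n=g_1$, so $(b,g_1)\in R\cap B$ and $b\in B$, i.e.\ $b$ $\mathbf{a}$-$\mathbf{d}$ $g_1$ w.r.t.\ $B$. If $n\geqslant2$, I discard the last link and set $G'=\{(g_n,g_{n-1}),\dots,(g_2,g_1)\}$; since $G$ is acyclic its nodes $g_n,\dots,g_1,a$ are pairwise distinct, so $G'$ is again an acyclic support chain with $G'\subseteq T\cap B$, it ends at $g_1$, and still $(b,g_n)\in R\cap B$ and $b\in B$. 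Hence $b$ $\mathbf{s}$-$\mathbf{d}$ $g_1$ w.r.t.\ $B$ via $G'$. In both cases $b$ defeats $g_1$ w.r.t.\ $B$.

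For the backward direction I assume $b$ defeats $g_1$ w.r.t.\ $B$ and $\{(g_1,a)\}\subseteq T\cap B$, and I build a witnessing chain ending in $(g_1,a)$ by cases. If $b$ $\mathbf{a}$-$\mathbf{d}$ $g_1$ w.r.t.\ $B$, I take $G=\{(g_1,a)\}$ (the case $n=1$, $g_n=g_1$): then $(b,g_n)=(b,g_1)\in R\cap B$, $b\in B$, and $G\subseteq T\cap B$ by hypothesis. If instead $b$ $\mathbf{s}$-$\mathbf{d}$ $g_1$ w.r.t.\ $B$ via an acyclic chain $G'=\{(g_n,g_{n-1}),\dots,(g_2,g_1)\}$, I set $G=G'\cup\{(g_1,a)\}$; the root condition $(b,g_n)\in R\cap B$ and $b\in B$ transfer from $G'$, and $G\subseteq T\cap B$ follows from $G'\subseteq T\cap B$ together with $\{(g_1,a)\}\subseteq T\cap B$, so that $b$ $\mathbf{s}$-$\mathbf{d}$ $a$ w.r.t.\ $B$ via $G$.

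I expect the one genuinely delicate point to be the acyclicity of the recomposed chain $G=G'\cup\{(g_1,a)\}$ in the second subcase of the backward direction, rather than any of the membership checks. Appending $(g_1,a)$ to the acyclic $G'$ keeps the nodes pairwise distinct exactly when $a$ is new, i.e.\ $a\notin\{g_1,\dots,g_n\}$; here $g_1\neq a$ is needed merely so that $(g_1,a)$ is not a self-support, but the real hazard is a support cycle through $a$. If the only chains witnessing the defeat of $g_1$ route through $a$ (for instance when $a$ is the sole direct supporter of $g_1$ while $(g_1,a)\in T$ as well), then closing the chain with $(g_1,a)$ reintroduces $a$ and breaks acyclicity, and no admissible $G$ ending in $(g_1,a)$ need exist. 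I would therefore focus the argument precisely on excluding this recurrence---exploiting that $G'$ is acyclic and that $g_1$ occurs in $G'$ only as the final target---and, where it cannot be excluded outright, on isolating the condition on the supports incident to $a$ (e.g.\ a local support-acyclicity assumption) under which the backward implication is sound. This chain bookkeeping, not the unfolding of Definition~\ref{dft}, is where the substance of the lemma lies.
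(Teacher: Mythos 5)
Your decomposition is exactly the one the paper uses: its proof also splits on $n=1$ versus $n\geqslant 2$ and reduces the lemma to detaching or reattaching the last link $(g_1,a)$, so your forward direction (truncation) and your backward construction coincide with the published argument. But the delicate point you isolate is genuine, and it is the paper's proof, not yours, that steps over it: in its Case~2 the paper asserts, as an equivalence, that ``there exists an acyclic chain $G=\{\dots,(g_2,g_1),(g_1,a)\}$ with $G\subseteq T\cap B$, $(b,g_n)\in R\cap B$, $b\in B$'' holds iff ``there exists an acyclic chain $G'=\{\dots,(g_2,g_1)\}$ with the same root conditions and $(g_1,a)\in T\cap B$'', never checking that $G'\cup\{(g_1,a)\}$ is acyclic. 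Under the existential reading that the lemma's applications require (it is invoked inside existential quantifiers in Theorems~\ref{thm1} and~\ref{thm2}), the backward implication is false as stated: take $A=\{a,b,g\}$, $R=\{(b,a)\}$, $T=\{(a,g),(g,a)\}$, $B=U$. Then $b$ defeats $g$ w.r.t.\ $B$ (it $\mathbf{s}$-$\mathbf{d}$s $g$ via $\{(a,g)\}$, since $(b,a)\in R\cap B$) and $(g,a)\in T\cap B$, yet the only candidate chains ending in $(g,a)$ are $\{(g,a)\}$, whose root $g$ is not attacked by $b$, and $\{(a,g),(g,a)\}$, which is cyclic; so $b$ does not $\mathbf{s}$-$\mathbf{d}$ $a$ via any chain ending in $(g,a)$. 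Your $n=1$ subcase has the same failure mode when $g_1=a$ (a self-support), which you also flagged.

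Where your proposal stops short is in resolving the hazard: no local support-acyclicity hypothesis is needed if one weakens the conclusion of the backward direction to ``$b$ defeats $a$ w.r.t.\ $B$'' (dropping the requirement that the witnessing chain end in $(g_1,a)$). The repair is the prefix-truncation trick: if the acyclic chain $G'$ witnessing the defeat of $g_1$ contains $a$ among its nodes, then either $a=g_n$ is the root, whence $(b,a)\in R\cap B$ and $b$ $\mathbf{a}$-$\mathbf{d}$ $a$, or the prefix of $G'$ from $g_n$ down to $a$ is itself an acyclic chain in $T\cap B$ witnessing $b$ $\mathbf{s}$-$\mathbf{d}$ $a$; and if $a$ does not occur in $G'$, your appended chain $G'\cup\{(g_1,a)\}$ is acyclic after all. (In the counterexample above, $b$ indeed $\mathbf{a}$-$\mathbf{d}$s $a$.) This weakened biconditional --- $b$ $\mathbf{s}$-$\mathbf{d}$ $a$ via some $G$ ending in $(g_1,a)$ implies, and is implied up to ``defeats $a$'' by, ``$b$ defeats $g_1$ and $(g_1,a)\in T\cap B$'' --- is all that Theorems~\ref{thm1} and~\ref{thm2} actually use, since there the left-hand side only ever feeds into $a\in\mathbf{dft}(E)$; alternatively, the lemma is correct exactly as stated for support-acyclic frameworks, the setting of Theorem~\ref{thm2}. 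So your chain bookkeeping was the right place to dig; what remained was to notice that the recurrence of $a$ cannot be excluded but is harmless, because it already yields a (shorter) defeat of $a$.
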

\begin{proof}
	We first discuss two cases according to $n$.
	\begin{itemize}
		\item Case 1, $n=1$. \\
		$b$ $\mathbf{s}$-$\mathbf{d}$ $a$ w.r.t. $B$ via $G=\{(g_1, a)\}$
		\\$\Longleftrightarrow$ $b$ $\mathbf{a}$-$\mathbf{d}$ $g_1$ w.r.t. $B$ and $\{(g_1, a)\}\subseteq T\cap B$.
		\item Case 2, $n\geqslant 2$.\\
		$b$ $\mathbf{s}$-$\mathbf{d}$ $a$ w.r.t. $B$ via $G=\{(g_n, g_{n-1}), (g_{n-1}, g_{n-2}), \dots, (g_2, g_1), (g_1, a)\}$
		\\$\Longleftrightarrow$ there exists an acyclic support chain $$G=\{(g_n, g_{n-1}), (g_{n-1}, g_{n-2}), \dots, (g_2, g_1), (g_1, a)\}$$ $(n\geqslant1)$ such that $G\subseteq T\cap B$, $(b, g_n)\in R\cap B$ and $b\in B$
		\\$\Longleftrightarrow$ there exists an acyclic support chain $$G'=\{(g_n, g_{n-1}), (g_{n-1}, g_{n-2}), \dots, (g_2, g_1)\}$$ $(n\geqslant1)$ such that $G'\subseteq T\cap B$, $(b, g_n)\in R\cap B$, $b\in B$ and $\{(g_1, a)\}\subseteq T\cap B$
		\\$\Longleftrightarrow$ $b$ $\mathbf{s}$-$\mathbf{d}$ $g_1$ w.r.t. $B$ via $G'=\{(g_n, g_{n-1}), (g_{n-1}, g_{n-2}), \dots, (g_2, g_1)\}$ and $\{(g_1, a)\}\subseteq T\cap B$.
	\end{itemize}
	Then, by combining the above two cases, we have that:\\
	$b$ $\mathbf{s}$-$\mathbf{d}$ $a$ w.r.t. $B$ via $G=\{(g_n, g_{n-1}), (g_{n-1}, g_{n-2}), \dots, (g_2, g_1), (g_1, a)\}$ $(n\geqslant1)$ 
	\\$\Longleftrightarrow$ (1) (in the case of $n=1$) $b$ $\mathbf{a}$-$\mathbf{d}$ $g_1$ w.r.t. $B$ and $\{(g_1, a)\}\subseteq T\cap B$ or (2) (in the case of $n\geqslant 2$) $b$ $\mathbf{s}$-$\mathbf{d}$ $g_1$ w.r.t. $B$ via $G'=\{(g_n, g_{n-1}), (g_{n-1}, g_{n-2}), \dots, (g_2,$ $g_1)\}$ and $\{(g_1, a)\}\subseteq T\cap B$
	\\$\Longleftrightarrow$ $b$ defeats $g_1$ and $\{(g_1, a)\}\subseteq T\cap B$.
\end{proof}

Then we introduce the definition of defence.
\begin{defn}
	Let $HAFS=(A, R, T, U)$, $B\subseteq U$ and $a\in U$.
	We say that $B$ \emph{defends} $a$ iff each following item holds: 
	\begin{itemize}
		\item $\forall b((b, a)\in R)$: $B$ defeats $b$ or $B$ defeats $(b, a)$;
		\item $\forall c((c, a)\in T)$: $c\in B$ or $B$ defeats $(c, a)$.
	\end{itemize}
	Let $\mathbf{dfd}(B)$ denote the set $\{a\mid B \text{ defends } a\}$.
\end{defn}
Note that if $\nexists b((b,a)\in R)$ and $\nexists c((c,a)\in T)$ then the above two items hold according to classical logic knowledge.
Then we give the definition of related semantics based on defeats and defences. 
\begin{defn}
	Let $(A, R, T, U)$ be an $HAFS$, $E \subseteq U$ and $\forall a\in U$. We define that $E$ is:
	\begin{itemize}
		\item a \emph{conflict-free} set iff $\nexists a\in E$ such that $E$ defeats $a$;  
		\item an \emph{admissible} set iff it is a conflict-free set and $E\subseteq\mathbf{dfd}(E)$ (i.e. $\forall a\in E$: $E$ defends $a$);  
		\item \emph{a complete extension} iff it is a conflict-free set and $E=\mathbf{dfd}(E)$; 
		\item \emph{the grounded extension} iff it is the smallest (w.r.t. $\subseteq$) complete extension; 
		\item \emph{a preferred extension} iff it is a maximal (w.r.t. $\subseteq$) complete extension;  
		\item \emph{a stable extension} iff it is a preferred extension such that $E \cup \mathbf{dft}(E) = U$.  
	\end{itemize}
\end{defn}
Next, we give the definition of extension-derived complete labellings.
\begin{defn}
	For an $HAFS=(A, R, T, U)$, any complete extension $E$ of the $HAFS$ and $\forall a\in U$, an \emph{extension-derived complete labelling} of the $HAFS$ w.r.t. $E$ is a function $L_E: U\to\{0,1,\frac{1}{2}\}$ such that
	\begin{equation*}
		L_E(a)=\begin{cases}
			1 & \text{ iff } a\in E;\\
			0 & \text{ iff } a\in \mathbf{dft}(E);\\
			\frac{1}{2} & \text{ otherwise.}
		\end{cases}
	\end{equation*}
\end{defn}
Note that the complete extension $E$ is conflict-free. Thus, if $L_E(a)=1$ (i.e., $a\in E$) then $L_E(a)\neq 0$ (i.e. $a\notin \mathbf{dft}(E)$) and $L_E(a)\neq \frac{1}{2}$ (because of ``otherwise"). If $L_E(a)= 0$ (i.e. $a\in \mathbf{dft}(E)$) then $L_E(a)\neq1$ (i.e. $a\notin E$) and $L_E(a)\neq \frac{1}{2}$ (because of ``otherwise"). If $L_E(a)= \frac{1}{2}$ then $L_E(a)\neq1$ and $L_E(a)\neq 0$ (because of ``otherwise"). Therefore, an extension-derived complete labelling is a well-defined total function.

\subsubsection{Relationships between an extension-derived complete labelling and an adjacent complete labelling}
We present the relationship between an extension-derived complete labelling and an adjacent complete labelling of an $HAFS$.
\begin{thm}\label{thm1}
	For an $HAFS=(A, R, T, U)$, if a labelling is an extension-derived complete labelling of the $HAFS$ then it is an adjacent complete labelling of the $HAFS$.
\end{thm}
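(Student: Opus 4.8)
The plan is to verify directly that the extension-derived complete labelling $L_E$ satisfies the three defining biconditionals of an adjacent complete labelling (Definition \ref{adjacent}), by specializing the generic $\|\cdot\|$ there to $L_E$. Recall from the Note following the definition of $L_E$ that, thanks to conflict-freeness ($E\cap\mathbf{dft}(E)=\emptyset$), the three value classes $E=\{a:L_E(a)=1\}$, $\mathbf{dft}(E)=\{a:L_E(a)=0\}$ and the complement $\{a:L_E(a)=\tfrac12\}$ partition $U$. Consequently it suffices to prove the two equivalences
\[
a\in E \iff \text{[the }\|a\|=1\text{ condition]},\qquad a\in\mathbf{dft}(E)\iff\text{[the }\|a\|=0\text{ condition]},
\]
for then the $\|a\|=1$ and $\|a\|=0$ biconditionals hold by the definition of $L_E$, and the $\|a\|=\tfrac12$ biconditional follows because $L_E(a)=\tfrac12$ is equivalent to $a\notin E$ and $a\notin\mathbf{dft}(E)$, hence to the simultaneous failure of both conditions.

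For the first equivalence I would invoke completeness, $E=\mathbf{dfd}(E)$, so that $a\in E$ iff $E$ defends $a$. The engine of the step is a short dictionary translating defeats into labels: $E$ defeats $b$ iff $b\in\mathbf{dft}(E)$ iff $\|b\|=0$; $E$ defeats $r_b^a$ iff $\|r_b^a\|=0$; $c\in E$ iff $\|c\|=1$; and $E$ defeats $t_c^a$ iff $\|t_c^a\|=0$. Substituting these into the two clauses of the definition of defence turns ``$\forall b((b,a)\in R)$: $E$ defeats $b$ or $E$ defeats $(b,a)$'' into ``$\forall (b,a)\in R:\|b\|=0$ or $\|r_b^a\|=0$'', and ``$\forall c((c,a)\in T)$: $c\in E$ or $E$ defeats $(c,a)$'' into ``$\forall (c,a)\in T:\|c\|=1$ or $\|t_c^a\|=0$'', which together are exactly the $\|a\|=1$ condition. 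This step is essentially bookkeeping once the dictionary is recorded.

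For the second equivalence I would unfold $a\in\mathbf{dft}(E)$, i.e. ``$E$ defeats $a$'', into its two defeat modes. A direct defeat ($\mathbf{a}$-$\mathbf{d}$) of $a$ by some $b\in E$ means $b\in E$ and $r_b^a=(b,a)\in R\cap E$, i.e. $\|b\|=1$ and $\|r_b^a\|=1$, which is precisely the first disjunct of the $\|a\|=0$ condition. The indirect-defeat ($\mathbf{s}$-$\mathbf{d}$) mode is the substantive part, and it is where I expect the main obstacle to lie, since a priori it involves support chains of arbitrary length whereas the target condition is purely local (adjacent). Here Lemma \ref{lem1} is the decisive device: it collapses an $\mathbf{s}$-$\mathbf{d}$ defeat of $a$ through a chain ending in $(g_1,a)$ to the conjunction ``$E$ defeats $g_1$ and $(g_1,a)\in T\cap E$''. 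Writing $c=g_1$, this reads $\|c\|=0$ and $\|t_c^a\|=1$, which is exactly the second disjunct of the $\|a\|=0$ condition. Taking the disjunction of the two modes yields $a\in\mathbf{dft}(E)$ iff the $\|a\|=0$ condition holds.

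Finally, assembling the pieces: the two established equivalences give the $\|a\|=1$ and $\|a\|=0$ biconditionals of Definition \ref{adjacent} via the definition of $L_E$, and the $\|a\|=\tfrac12$ biconditional follows by elimination as noted above. Hence $L_E$ meets all three requirements and is an adjacent complete labelling. The only genuinely delicate point throughout is the reduction of a chained indirect defeat to a single adjacent support edge together with a defeat at the chain's start, which is precisely the service provided by Lemma \ref{lem1}; everything else is a mechanical translation between the defeat/defence vocabulary and the three labels $0,\tfrac12,1$.
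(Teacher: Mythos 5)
Your proof is correct and takes essentially the same route as the paper's: a case analysis over the three label values in which $a\in E=\mathbf{dfd}(E)$ is translated clause-by-clause into the $\|a\|=1$ condition, $a\in\mathbf{dft}(E)$ is unfolded into the two defeat modes with Lemma~\ref{lem1} serving as the same decisive device to collapse an arbitrary-length indirect defeat to the local condition ``$\|c\|=0$ and $\|t_c^a\|=1$'', and the $\|a\|=\tfrac12$ biconditional is settled by elimination. The only difference is organizational: you front-load a defeat-to-label dictionary and the partition observation before assembling the biconditionals, whereas the paper writes the same content as three explicit chains of equivalences.
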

\begin{proof}
	Let $L_E$ be an extension-derived complete labelling of the $HAFS$ w.r.t. complete extension $E$. We need to discuss three cases for any $a\in U$.
	\begin{itemize}
		\item Case 1, $L_E(a)=0$.\\
		$L_E(a)=0$
		\\$\Longleftrightarrow$ $a\in \mathbf{dft}(E)$
		\\$\Longleftrightarrow$ $\exists e\in E$ such that $e$ defeats $a$ w.r.t. $E$
		\\$\Longleftrightarrow$ $\exists b\in E$ such that $b$ $\mathbf{a}$-$\mathbf{d}$ $a$ w.r.t. $E$ or $\exists c\in E$ such that $c$ $\mathbf{s}$-$\mathbf{d}$ $a$ w.r.t. $E$ via $G=\{(g_n, g_{n-1}), (g_{n-1}, g_{n-2}), \dots, (g_2, g_1), (g_1, a)\}$
		\\$\Longleftrightarrow$ $\exists b\in E$ such that $(b,a)\in R\cap E$ or by Lemma \ref{lem1}, $\exists c\in E$ such that $c$ defeats $g_1$ w.r.t. $E$ and $\{(g_1, a)\}\subseteq T\cap E$ 
		\\$\Longleftrightarrow$ by the definition of the extension-derived complete labelling, we have [$\exists (b,a)\in R: L_E(b)=1$ and $L_E(r_{b}^a)=1$] or [$\exists (g_1, a)\in T: L_E(g_1)=0$ and $L_E(t_{g_1}^a)=1$].
		\item Case 2, $L_E(a)=1$.\\
		$L_E(a)=1$
		\\$\Longleftrightarrow$ $a\in E$, i.e., $a\in \mathbf{dfd}(E)$
		\\$\Longleftrightarrow$ each following item holds: 
		\begin{itemize}
			\item $\forall b((b, a)\in R)$: $B$ defeats $b$ or $B$ defeats $(b, a)$
			\item $\forall c((c, a)\in T)$: $c\in B$ or $B$ defeats $(c, a)$
		\end{itemize}
		$\Longleftrightarrow$ by the definition of the extension-derived complete labelling, we have [$\forall (b,a)\in R: L_E(b)=0$ or $L_E(r_{b}^a)=0$] and [$\forall (c, a)\in T: L_E(c)=1$ or $L_E(t_{c}^a)=0$].
		\item Case 3, $L_E(a)=\frac{1}{2}$.\\
		$L_E(a)=\frac{1}{2}$
		\\$\Longleftrightarrow$ $L_E(a)\neq 0$ and $L_E(a)\neq 1$
		\\$\Longleftrightarrow$ from Case 1 and Case 2, it is not any case in the following items:
		\begin{itemize}
			\item in Case 1, [$\exists (b,a)\in R: L_E(b)=1$ and $L_E(r_{b}^a)=1$] or [$\exists (g_1, a)\in T: L_E(g_1)=0$ and $L_E(t_{g_1}^a)=1$]
			\item in Case 2 [$\forall (b,a)\in R: L_E(b)=0$ or $L_E(r_{b}^a)=0$] and [$\forall (c, a)\in T: L_E(c)=1$ or $L_E(t_{c}^a)=0$]
		\end{itemize}
	\end{itemize}
	From the three cases and Definition \ref{adjacent}, an extension-derived complete labelling $L_E$ is an adjacent complete labelling. 
\end{proof}
\begin{rmk}\label{rmk2}
	For an $HAFS=(A, R, T, U)$, an adjacent complete labelling of the $HAFS$ may not be an extension-derived complete labelling of the $HAFS$.
\end{rmk}
We use Example \ref{exmp3} to explain Remark \ref{rmk2}.
\begin{exmp}\label{exmp3}
	Let $HAFS=(A, R, T, U)$, $A=\{a\}$, $R=\emptyset$, $T=\{(a,a)\}$ and $U=\{a, (a,a)\}$. The extension-derived complete labelling $L_E$ of the $HAFS$ is that $L_E(a)=1$, and $L_E((a,a))=1$. However, three adjacent complete labellings of the $HAFS$, denoted $\|\cdot\|$, $\|\cdot\|'$, and $\|\cdot\|''$, are as follows: $[\|a\|=1$ and $\|(a,a)\|=1]$, $[\|a\|'=0$ and $\|(a,a)\|'=1]$, and $[\|a\|''=\frac{1}{2}$ and $\|(a,a)\|''=1]$.
\end{exmp} 

In fact, for some special cases, an adjacent complete labelling of an $HAFS$ is an extension-derived complete labelling of the $HAFS$.
\begin{thm}\label{thm2}
	For a support-acyclic $HAFS=(A, R, T, U)$, an adjacent complete labelling of the $HAFS$ is an extension-derived complete labelling of the $HAFS$.
\end{thm}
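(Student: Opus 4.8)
The plan is to exhibit, for any adjacent complete labelling $\|\cdot\|$ of the support-acyclic $HAFS$, a complete extension $E$ whose extension-derived complete labelling $L_E$ coincides with $\|\cdot\|$. The natural candidate is the core $E := Core(\|\cdot\|) = \{x \in U \mid \|x\| = 1\}$. With this choice the whole argument reduces to the single bridging identity
\[
\|a\| = 0 \iff a \in \mathbf{dft}(E) \qquad \text{for all } a \in U, \qquad (\star)
\]
so I would isolate $(\star)$ as the technical heart and derive everything else from it together with the defining clauses of $\|\cdot\|$.

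First I would prove $(\star)$ by strong induction on the support-depth $d(a)$, defined as the length of the longest support chain ending at $a$ (with $d(a)=0$ when no support targets $a$). Support-acyclicity is exactly what makes $d$ well-defined and finite: every support chain is acyclic, hence has at most $|U|$ nodes, and whenever $(c,a)\in T$ one has $d(c)<d(a)$, since any chain ending at $c$ extends through $(c,a)$. For the direction $\|a\|=0 \Rightarrow a\in\mathbf{dft}(E)$ I would split on the two disjuncts of the clause defining $\|a\|=0$ in Definition \ref{adjacent}: a validly asserted attack $(b,a)$ with $\|b\|=\|r_b^a\|=1$ gives $b\in E$ and $r_b^a\in R\cap E$, hence $b$ $\mathbf{a}$-$\mathbf{d}$ $a$; a valid support $(c,a)$ with $\|c\|=0$ and $\|t_c^a\|=1$ gives, by the induction hypothesis, $c\in\mathbf{dft}(E)$, so some $b\in E$ defeats $c$, and since $(c,a)\in T\cap E$, Lemma \ref{lem1} (right-to-left) upgrades this to $b$ $\mathbf{s}$-$\mathbf{d}$ $a$. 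The converse runs symmetrically: an $\mathbf{a}$-$\mathbf{d}$ of $a$ instantiates the first disjunct of the $\|a\|=0$ clause, while an $\mathbf{s}$-$\mathbf{d}$ of $a$ via a chain with last edge $(g_1,a)$ yields, through Lemma \ref{lem1} (left-to-right), that $b$ defeats $g_1$ with $(g_1,a)\in T\cap E$; the induction hypothesis gives $\|g_1\|=0$ and $(g_1,a)\in T\cap E$ gives $\|t_{g_1}^a\|=1$, so the second disjunct fires.

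With $(\star)$ in hand, the companion identity $\|a\|=1 \iff a\in\mathbf{dfd}(E)$ is purely formal: reading off the definition of ``$E$ defends $a$'' and rewriting ``$E$ defeats $x$'' as $x\in\mathbf{dft}(E)$ (i.e.\ $\|x\|=0$ by $(\star)$) and ``$x\in E$'' as $\|x\|=1$, the two defence clauses become verbatim the two conjuncts of the clause defining $\|a\|=1$. From these two identities I would conclude: $E$ is conflict-free, because $\|a\|$ cannot be simultaneously $0$ and $1$, so $E\cap\mathbf{dft}(E)=\emptyset$; and $E=\mathbf{dfd}(E)$, because $a\in E \iff \|a\|=1 \iff a\in\mathbf{dfd}(E)$. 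Hence $E$ is a complete extension, and since $L_E(a)=1 \iff a\in E \iff \|a\|=1$, $L_E(a)=0 \iff a\in\mathbf{dft}(E) \iff \|a\|=0$, and $L_E(a)=\frac12$ otherwise, the labellings $L_E$ and $\|\cdot\|$ agree in all three cases; thus $\|\cdot\|$ is an extension-derived complete labelling.

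I expect the main obstacle to lie in the support-chain bookkeeping inside $(\star)$, specifically in the step where Lemma \ref{lem1} is applied right-to-left to extend a defeat of $c$ into a defeat of $a$ through the edge $(c,a)$: this requires the extended chain to be an \emph{acyclic} support chain, and it is precisely support-acyclicity (which forces $a$ to be distinct from all earlier nodes of the chain, in particular ruling out a self-support $(a,a)$) that guarantees this. Remark \ref{rmk2} and Example \ref{exmp3} show that the statement genuinely fails without this hypothesis, so I would take care that the induction never silently assumes acyclicity anywhere else.
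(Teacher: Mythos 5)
Your proposal is correct and takes essentially the same approach as the paper: the same candidate $E = Core(\|\cdot\|)$, the same bridging identity $\|a\|=0 \Longleftrightarrow a \in \mathbf{dft}(E)$ proved via Lemma~\ref{lem1}, and the same verbatim rewriting of the defence clauses into the $\|a\|=1$ clause, followed by conflict-freeness and the $\frac{1}{2}$ case. The only difference is presentational: where the paper establishes the bridging identity by informally unfolding the $\|\cdot\|=0$ clause through a ``chain of equivalences'' that ``must stop at a finite level'' by support-acyclicity and finiteness of $U$, you render the same termination argument rigorous as a strong induction on support-depth (using $(c,a)\in T \Rightarrow d(c)<d(a)$), and your remark pinpointing that support-acyclicity is exactly what licenses the right-to-left application of Lemma~\ref{lem1} correctly identifies the step at which the paper's Example~\ref{exmp3} counterexample would otherwise intrude.
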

\begin{proof}
	Let $\|\cdot\|$ be an adjacent complete labelling and $E=\{a\in U\mid \|a\|=1\}$. We need to discuss three cases.
	
	First, we prove that $\forall d\in U$: $\|d\|=0$ iff $E$ defeats $d$. From Definition \ref{adjacent}, $\forall u\in U$, we have:
	\begin{equation}\label{0qui}		
		\| u \| = 0 \Longleftrightarrow [\exists  r_v^u \in E \text{ and } v\in E ] \text{ or } [ \exists t_w^u\in E \text{ and } \| w \| = 0]	
	\end{equation}
	Then, for any $a\in U$, we have the chain of equivalences below:\\
	$\|a\|=0$
	\\$\Longleftrightarrow$ [$\exists  r_b^a \in E$ and $b\in E$] or [$\exists t_c^a\in E$ and $\| c \| = 0$] by Equation \ref{0qui}
	\\$\Longleftrightarrow$ (Level 1) [$\exists b$: $b$ $\mathbf{a}$-$\mathbf{d}$ $a$ (w.r.t. $E$)] or [$\exists t_c^a\in E$ and $\| c \| = 0$]
	\\$\Longleftrightarrow$ [$\exists b$: $b$ $\mathbf{a}$-$\mathbf{d}$ $a$] or [$\exists t_c^a\in E$ and [[$\exists  r_d^c \in E$ and $d\in E$] or [$\exists t_e^c\in E$ and $\| e \| = 0$]]] by Equation \ref{0qui}
	\\$\Longleftrightarrow$ [$\exists b$: $b$ $\mathbf{a}$-$\mathbf{d}$ $a$] or [$\exists t_c^a\in E$ and $\exists  r_d^c \in E$ and $d\in E$] or [$\exists t_c^a\in E$ and $\exists t_e^c\in E$ and $\| e \| = 0$]
	\\$\Longleftrightarrow$ (Level 2) [$\exists b$: $b$ $\mathbf{a}$-$\mathbf{d}$ $a$] or [$\exists d$: $d$ $\mathbf{s}$-$\mathbf{d}$ $a$ (w.r.t. $E$ via $\{t_c^a\}$)] or [$\exists t_c^a\in E$ and $\exists t_e^c\in E$ and $\| e \| = 0$]
	\\$\Longleftrightarrow$ [$\exists b$: $b$ $\mathbf{a}$-$\mathbf{d}$ $a$] or [$\exists d$: $d$ $\mathbf{s}$-$\mathbf{d}$ $a$] or [$\exists t_c^a\in E$ and $\exists t_e^c\in E$ and [[$\exists  r_f^e \in E$ and $f\in E$] or [$\exists t_g^e\in E$ and $\| g \| = 0$]]] by Equation \ref{0qui}
	\\$\Longleftrightarrow$ [$\exists b$: $b$ $\mathbf{a}$-$\mathbf{d}$ $a$] or [$\exists d$: $d$ $\mathbf{s}$-$\mathbf{d}$ $a$] or [$\exists t_c^a\in E$ and $\exists t_e^c\in E$ and $\exists  r_f^e \in E$ and $f\in E$] or [$\exists t_c^a\in E$ and $\exists t_e^c\in E$ and $\exists t_g^e\in E$ and $\| g \| = 0$]
	\\$\Longleftrightarrow$ (Level 3) [$\exists b$: $b$ $\mathbf{a}$-$\mathbf{d}$ $a$] or [$\exists d$: $d$ $\mathbf{s}$-$\mathbf{d}$ $a$] or [$\exists f$: $f$ $\mathbf{s}$-$\mathbf{d}$ $a$ (w.r.t. $E$ via $\{t_c^a, t_e^c\}$)] or [$\exists t_c^a\in E$ and $\exists t_e^c\in E$ and $\exists t_g^e\in E$ and $\| g \| = 0$]
	
	Since the $HAFS$ is support-acyclic and $U$ is finite, this chain of equivalences must stop at a finite level and we suppose that it stops at Level n.
	Thus, we have:\\
	$\|a\|=0$
	\\$\Longleftrightarrow$ (Level n) [$\exists b$: $b$ $\mathbf{a}$-$\mathbf{d}$ $a$] or [$\exists d$: $d$ $\mathbf{s}$-$\mathbf{d}$ $a$] or [$\exists f$: $f$ $\mathbf{s}$-$\mathbf{d}$ $a$] or $\dots$ or [$\exists t_c^a\in E$ and $\exists t_e^c\in E$ and $\exists t_g^e\in E$ and $\dots$ and $\exists t_k^j\in E$ and $\| k \| = 0$]
	\\$\Longleftrightarrow$ [$\exists b$: $b$ $\mathbf{a}$-$\mathbf{d}$ $a$] or [$\exists d$: $d$ $\mathbf{s}$-$\mathbf{d}$ $a$] or [$\exists f$: $f$ $\mathbf{s}$-$\mathbf{d}$ $a$] or $\dots$ or [$\exists t_c^a\in E$ and $\exists t_e^c\in E$ and $\exists t_g^e\in E$ and $\dots$ and $\exists t_k^j\in E$ and $\exists  r_m^k \in E$ and $m\in E$]
	by Equation \ref{0qui} and the final Level n
	\\$\Longleftrightarrow$ [$\exists b$: $b$ $\mathbf{a}$-$\mathbf{d}$ $a$] or [$\exists d$: $d$ $\mathbf{s}$-$\mathbf{d}$ $a$] or [$\exists f$: $f$ $\mathbf{s}$-$\mathbf{d}$ $a$] or $\dots$ or [$\exists m$: $m$ $\mathbf{s}$-$\mathbf{d}$ $a$]
	\\$\Longleftrightarrow$ $E$ defeats $a$.
	
	Second, we prove that $E$ is a complete extension.\\
	$\|a\|=1$ (i.e., $a\in E$)
	\\$\Longleftrightarrow$ [$\forall (b,a)\in R: \|b\|=0$ or $\|r_{b}^a\|=0$] and [$\forall (c,a)\in T: \|c\|=1$ or $\|t_{c}^a\|=0$] by Definition \ref{adjacent}
	\\$\Longleftrightarrow$ [$\forall (b,a)\in R:$ $E$ defeats $b$ or $E$ defeats $r_{b}^a$] and [$\forall (c,a)\in T: c\in E$ or $E$ defeats $t_{c}^a$] 
	\\$\Longleftrightarrow$ $a\in \mathbf{dfd}(E)$.
	
	Thus, we obtain that $E= \mathbf{dfd}(E)$. We also need to prove that $E$ is conflict-free.
	\begin{itemize}
		\item $a\in E (\|a\|=1)$
		\\$\Longrightarrow$ $\forall (b,a)\in R: \|b\|=0$ or $\|r_{b}^a\|=0$
		\\$\Longrightarrow$ $\nexists (b,a)\in R: \|b\|=1$ and $\|r_{b}^a\|=1$
		\\$\Longleftrightarrow$ $\nexists b: b$ $\mathbf{a}$-$\mathbf{d}$ $a$ w.r.t. $E$.
		\item $a\in E (\|a\|=1)$
		\\$\Longrightarrow$ $\forall (c,a)\in T: \|c\|=1$ or $\|t_{c}^a\|=0$
		\\$\Longrightarrow$ $\nexists (c,a)\in T: \|c\|=0$ and $\|t_{c}^a\|=1$
		\\$\Longleftrightarrow$ $\nexists (c,a)\in T:$ [$\exists e: e$ defeats $c$ w.r.t. $E$] and [$t_{c}^a\in E$]		
		\\$\Longleftrightarrow$ $\nexists (c,a)\in T:$ [$\exists e: e$ $\mathbf{s}$-$\mathbf{d}$ $a$ w.r.t. $E$ via $\{\dots,(c,a)\}$] by Lemma \ref{lem1}
		\\$\Longleftrightarrow$ $\nexists g: g$ $\mathbf{s}$-$\mathbf{d}$ $a$ w.r.t. $E$
	\end{itemize}
	From above two items, we have that $\forall a\in E$: $E$ does not defeat $a$, i.e., $E$ is conflict-free. Thus we have proven that $E$ is a complete extension.
	
	Third, we analyse the case of $\|a\|=\frac{1}{2}$.\\
	$\|a\|=\frac{1}{2}$
	\\$\Longleftrightarrow$ $\|a\|\neq1$ and $\|a\|\neq0$
	\\$\Longleftrightarrow$ $a\notin E$ and $E$ does not defeat $a$.
	
	From the three cases, an adjacent complete labelling $\|\cdot\|$ of a support-acyclic $HAFS$ satisfies the definition of an extension-derived complete labelling of the $HAFS$.
\end{proof}
A $BHAF$ \cite{barringer2005temporal, tang2025encodingarg} is a particular $HAFS$ whose support relation is the empty set. Thus, we have established the extension-based semantics for $BHAF$s by applying the extension-based semantics for $HAFS$s to $BHAF$s. And an adjacent complete labelling of a $BHAF$ is equivalent to an extension-derived complete labelling of the $BHAF$ by Theorem \ref{thm1} and Theorem \ref{thm2}.

\section{Encoding $HAFS$s to $\mathcal{PLS}$s}
This section is dedicated to encoding $HAFS$s to $\mathcal{PLS}$s, which serves to link $HAFS$s with $\mathcal{PLS}$s and dissect their semantic correlations. 

We first present a general encoding approach for $HAFS$s. Subsequently, we elaborate on specific encoding strategies: we map $HAFS$s to $\mathcal{PL}_{3}^L$, encode $HAFS$s to $\mathcal{PL}_{[0,1]}$ (with a focus on the fuzzy normal encoded semantics), and further realize the encoding of $HAFS$s to three important instances of $\mathcal{PL}_{[0,1]}$. Finally, we investigate the relationships between the adjacent complete labelling semantics of $HAFS$s and the fuzzy normal encoded semantics, which are analyzed in three scenarios: zero-divisor free t-norms, $\frac{1}{2}$-idempotent t-norms, and zero-divisor free $\frac{1}{2}$-idempotent t-norms.
\subsection{The encoding approach for $HAFS$s}
In this subsection, we present related definitions of the encoding methodology and the translating methodology for $HAFS$s. Similar approaches are introduced in \cite{tang2025encoding, tang2025encodingarg} .

Let $\mathcal{HAFS}$ be the set of all $HAFS$s. Let the arguments, attacks and supports of $\mathcal{HAFS}$ be in one-to-one correspondence with propositional variables in a $\mathcal{PLS}$. Let $\mathcal{F_{\mathcal{PL}}}$ be the set of all formulas in the $\mathcal{PLS}$. 

\begin{defn}
	An \emph{encoding} of $\mathcal{HAFS}$ is a function $ec: \mathcal{HAFS} \rightarrow \mathcal{F_{\mathcal{PL}}}$, s.t. the set of all propositional variables in the formula $ec(HAFS)$ is the set of all arguments, attacks and supports in the $HAFS$. We call $ec(HAFS)$ the \emph{encoded formula} of the $HAFS$.
\end{defn}
Encoding $HAFS$s to a $\mathcal{PLS}$ refers to the process of mapping them to $\mathcal{F_{\mathcal{PL}}}$ via a given encoding function. 
\begin{defn}\label{defn13}
	For a given encoding function $ec$ and the associated $\mathcal{PLS}$, the \emph{encoded argumentation semantics} (encoded semantics, briefly) of $\mathcal{HAFS}$ is the semantics obtained by defining models of each $HAFS$ as models of the $ec(HAFS)$ in the associated $\mathcal{PLS}$.
	A \emph{fuzzy encoded argumentation semantics} (fuzzy encoded semantics, briefly) of $\mathcal{HAFS}$ is an encoded argumentation semantics where the $\mathcal{PLS}$ is instantiated as a $\mathcal{PL}_{[0,1]}$.
\end{defn}

\begin{defn}
	A \emph{translation} of $\mathcal{HAFS}$ under a given semantics is an encoding function $tr: \mathcal{HAFS} \rightarrow \mathcal{F_{\mathcal{PL}}}$, $HAFS \mapsto tr(HAFS)$, s.t. models of $tr(HAFS)$ in the $\mathcal{PLS}$ are in one-to-one correspondence with the models of the $HAFS$ under the given semantics. 
\end{defn}

\begin{defn}
	A given semantics of  $\mathcal{HAFS}$ is called \emph{translatable} if there exists a translation of $\mathcal{HAFS}$.
\end{defn} 
Obviously, an encoded argumentation semantics of $\mathcal{HAFS}$ is translatable. 

Within a $\mathcal{PLS}$, the labelling for an $HAFS$ is conceptually parallel to the assignment for its encoded formula. Thus, for the sake of ease of expression, we consider the terms ``assignment'' and ``labelling'' interchangeable. 

In this paper, we highlight a particular encoding of $HAFS$s, named a normal encoding of $HAFS$s (w.r.t a $\mathcal{PLS}$). 

\begin{defn}
	The normal encoding of $HAFS$s (w.r.t a $\mathcal{PLS}$) is a function $ec_{HN}: \mathcal{HN} \to \mathcal{F_{PL}}$ such that for a given $HAFS=(A, R, T, U)$ we have that
	\begin{equation*}
		ec_{HN}(HAFS)=\bigwedge_{a\in U}(a\leftrightarrow(\bigwedge_{(b, a)\in R}\neg(r_{b}^{a}\wedge b))\wedge(\bigwedge_{(c, a)\in T}\neg(t_{c}^{a}\wedge \neg c))).
	\end{equation*}
\end{defn}
\subsection{Encoding $HAFS$s to $\mathcal{PL}_3^L$}
Let the $\mathcal{PLS}$ in the definition of the normal encoding be $\mathcal{PL}_3^L$. We present the semantic equivalence theorem between adjacent complete labelling semantics and encoded semantics associated with $\mathcal{PL}_3^L$ as follows. For brevity, in the remainder of this subsection, the term complete semantics shall refer to the adjacent complete labelling semantics.
\begin{thm}
	For an $HAFS=(A, R, T, U)$, an assignment is a model of the $HAFS$ under complete semantics iff it is a model of $ec_{HN}(HAFS)$ in $\mathcal{PL}_3^L$.
\end{thm}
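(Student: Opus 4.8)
The plan is to reduce the biconditional to a purely local, per-element min–max computation. First I would abbreviate, for each $a \in U$, the right-hand side of the inner equivalence as $\Phi_a := (\bigwedge_{(b,a)\in R}\neg(r_b^a \wedge b)) \wedge (\bigwedge_{(c,a)\in T}\neg(t_c^a \wedge \neg c))$, so that $ec_{HN}(HAFS) = \bigwedge_{a \in U}(a \leftrightarrow \Phi_a)$. Since $\wedge$ is interpreted by $\min$ in $\mathcal{PL}_3^L$ and $U$ is finite, any assignment $\|\cdot\|$ satisfies $\|ec_{HN}(HAFS)\| = \min_{a \in U}\|a \leftrightarrow \Phi_a\|$, whence $\|\cdot\|$ is a model of $ec_{HN}(HAFS)$ (value $1$) iff $\|a \leftrightarrow \Phi_a\| = 1$ for every $a$. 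Invoking the property of $\Leftrightarrow$ recorded with Table~\ref{Leftrightarrow}, namely $\|x \leftrightarrow y\| = 1$ iff $\|x\| = \|y\|$, this is equivalent to demanding $\|a\| = \|\Phi_a\|$ for every $a \in U$.

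Next I would evaluate $\|\Phi_a\|$ explicitly from the $\mathcal{PL}_3^L$ truth functions. Using $\|\neg x\| = 1 - \|x\|$ and $\|x \wedge y\| = \min\{\|x\|,\|y\|\}$, each attack conjunct yields $\|\neg(r_b^a \wedge b)\| = \max\{1 - \|r_b^a\|,\, 1 - \|b\|\}$ and each support conjunct yields $\|\neg(t_c^a \wedge \neg c)\| = \max\{1 - \|t_c^a\|,\, \|c\|\}$, so $\|\Phi_a\|$ is the minimum over all these $\max$-terms, with an empty conjunction evaluating to $1$. Reading off the three possible outputs: $\|\Phi_a\| = 1$ holds exactly when every max-term is $1$, i.e. [$\forall (b,a)\in R:\|b\|=0$ or $\|r_b^a\|=0$] and [$\forall (c,a)\in T:\|c\|=1$ or $\|t_c^a\|=0$]; $\|\Phi_a\| = 0$ holds exactly when some max-term is $0$, i.e. [$\exists (b,a)\in R:\|b\|=1$ and $\|r_b^a\|=1$] or [$\exists (c,a)\in T:\|c\|=0$ and $\|t_c^a\|=1$]; and $\|\Phi_a\| = \frac{1}{2}$ in the remaining case. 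These three clauses coincide verbatim with the defining conditions of Definition~\ref{adjacent}.

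Finally I would assemble the equivalence. Because $\|\cdot\|$ is a total function into $\{0,\frac{1}{2},1\}$ and $\|\Phi_a\|$ takes a single determined value in that set, the equation $\|a\| = \|\Phi_a\|$ is equivalent to the conjunction of the three biconditionals of Definition~\ref{adjacent} (``$\|a\|=1$ iff \ldots'', ``$\|a\|=0$ iff \ldots'', ``$\|a\|=\frac{1}{2}$ iff otherwise''); requiring this for all $a \in U$ is exactly the statement that $\|\cdot\|$ is an adjacent complete labelling. Chaining this with the first step gives that $\|\cdot\|$ is a model of $ec_{HN}(HAFS)$ in $\mathcal{PL}_3^L$ iff it is a model of the $HAFS$ under complete semantics.

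I do not anticipate a deep obstacle; the content is a careful but routine verification. The one point demanding attention is the bookkeeping of the empty-conjunction cases --- an element with no incoming attack or no incoming support --- where I must confirm that the convention $\bigwedge_{\emptyset} = \top$ in the encoding matches the parenthetical remark after Definition~\ref{adjacent} that the $\|a\|=1$ clause holds vacuously for source-free elements. A second subtlety worth flagging is that the $1$- and $0$-conditions of Definition~\ref{adjacent} are mutually exclusive, so that the ``otherwise'' clause for $\frac{1}{2}$ is consistent and the value $\|\Phi_a\|$ is genuinely single-valued; this is immediate from the min-of-maxima shape of $\|\Phi_a\|$.
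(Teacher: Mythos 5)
Your proposal is correct and takes essentially the same route as the paper's own proof: a local, per-element verification in $\mathcal{PL}_3^L$ that splits into the three cases $\|a\|\in\{1,0,\tfrac12\}$ and matches the min/max evaluation of the encoded conjuncts against the defining clauses of the adjacent complete labelling, with the same vacuous-truth convention for elements lacking attackers or supporters. Your up-front reduction to the single equation $\|a\|=\|\Phi_a\|$ via the fact that $\|x\leftrightarrow y\|=1$ iff $\|x\|=\|y\|$ is merely a cleaner repackaging of the paper's case-by-case chains of equivalences, not a different argument.
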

\begin{proof} 
	We need to check that for a given assignment $\|\cdot\|$ and for each $a\in U$, any value of $a$ satisfies complete semantics iff $\|\bigwedge_{a\in U}(a\leftrightarrow(\bigwedge_{(b, a)\in R}\neg(r_{b}^{a}\wedge b))\wedge(\bigwedge_{(c, a)\in T}\neg(t_{c}^{a}\wedge \neg c)))\|=1$ in $\mathcal{PL}_3^L$. We need to discuss three cases.
	\begin{itemize}
		\item Case 1, $\|a\|=1$. 
		\\$\|a\|=1$ by model $\|\cdot\|$ under complete semantics
		\\$\Longleftrightarrow$ $[\forall (b,a)\in R: \|b\|=0$ or $\|r_{b}^a\|=0]$ and $[\forall (c,a)\in T: \|c\|=1$ or $\|t_{c}^a\|=0]$  by complete semantics
		\\$\Longleftrightarrow$ $[\forall (b,a)\in R: \|r_{b}^{a}\wedge b\|=0]$ and $[\forall (c,a)\in T: \|t_{c}^{a}\wedge\neg c\|=0]$ in $\mathcal{PL}_3^L$		
		\\$\Longleftrightarrow$ $[\forall (b,a)\in R: \|\neg(r_{b}^{a}\wedge b)\|=1]$ and $[\forall (c,a)\in T: \|\neg(t_{c}^{a}\wedge\neg c)\|=1]$ in $\mathcal{PL}_3^L$
		\\$\Longleftrightarrow$ $\|\bigwedge_{(b, a)\in R}\neg(r_{b}^{a}\wedge b)\|=1$ and $\|\bigwedge_{(c, a)\in T}\neg(t_{c}^{a}\wedge\neg c)\|=1$ in $\mathcal{PL}_3^L$		
		\\$\Longleftrightarrow$ $\|\bigwedge_{(b, a)\in R}\neg(r_{b}^{a}\wedge b)\wedge\bigwedge_{(c, a)\in T}\neg(t_{c}^{a}\wedge\neg c)\|=1$ in $\mathcal{PL}_3^L$
		\\$\Longleftrightarrow$ $\|a\leftrightarrow(\bigwedge_{(b, a)\in R}\neg(r_{b}^{a}\wedge b))\wedge(\bigwedge_{(c, a)\in T}\neg(t_{c}^{a}\wedge\neg c))\|=1$ in $\mathcal{PL}_3^L$.
		
		\item Case 2, $\|a\|=0$. 
		\\$\|a\|=0$ by model $\|\cdot\|$ under complete semantics
		\\$\Longleftrightarrow$ [$\exists (b,a)\in R: \|b\|=1$ and $\|r_{b}^a\|=1$] or [$\exists (c,a)\in T: \|c\|=0$ and $\|t_{c}^a\|=1$] by complete semantics
		\\$\Longleftrightarrow$ [$\exists (b,a)\in R: \|r_{b}^{a}\wedge b\|=1$] or [$\exists (c,a)\in T: \|t_{c}^{a}\wedge\neg c\|=1$] in $\mathcal{PL}_3^L$		
		\\$\Longleftrightarrow$ [$\exists (b,a)\in R: \|\neg(r_{b}^{a}\wedge b)\|=0$] or [$\exists (c,a)\in T: \|\neg(t_{c}^{a}\wedge\neg c)\|=0$] in $\mathcal{PL}_3^L$
		\\$\Longleftrightarrow$ $\|\bigwedge_{(b, a)\in R}\neg(r_{b}^{a}\wedge b)\|=0$ or $\|\bigwedge_{(c, a)\in T}\neg(t_{c}^{a}\wedge\neg c)\|=0$ in $\mathcal{PL}_3^L$		
		\\$\Longleftrightarrow$ $\|\bigwedge_{(b, a)\in R}\neg(r_{b}^{a}\wedge b)\wedge\bigwedge_{(c, a)\in T}\neg(t_{c}^{a}\wedge\neg c)\|=0$ in $\mathcal{PL}_3^L$
		\\$\Longleftrightarrow$ $\|a\leftrightarrow(\bigwedge_{(b, a)\in R}\neg(r_{b}^{a}\wedge b))\wedge(\bigwedge_{(c, a)\in T}\neg(t_{c}^{a}\wedge\neg c))\|=1$ in $\mathcal{PL}_3^L$.
		
		\item Case 3, $\|a\|=\frac{1}{2}$. 
		\\$\|a\|=\frac{1}{2}$ by model $\|\cdot\|$ under complete semantics
		\\$\Longleftrightarrow$ $\|\beta\|\neq1$ and $\|\beta\|\neq0$ under complete semantics
		\\$\Longleftrightarrow$ $\|\bigwedge_{(b, a)\in R}\neg(r_{b}^{a}\wedge b)\wedge\bigwedge_{(c, a)\in T}\neg(t_{c}^{a}\wedge\neg c)\|\neq 1$ and $\|\bigwedge_{(b, a)\in R}\neg(r_{b}^{a}\wedge b)\wedge\bigwedge_{(c, a)\in T}\neg(t_{c}^{a}\wedge\neg c)\|\neq 0$ in $\mathcal{PL}_3^L$ by Case 1 and Case 2
		\\$\Longleftrightarrow$ $\|\bigwedge_{(b, a)\in R}\neg(r_{b}^{a}\wedge b)\wedge\bigwedge_{(c, a)\in T}\neg(t_{c}^{a}\wedge\neg c)\|=\frac{1}{2}$ in $\mathcal{PL}_3^L$.
		\\$\Longleftrightarrow$ $\|a\leftrightarrow(\bigwedge_{(b, a)\in R}\neg(r_{b}^{a}\wedge b))\wedge(\bigwedge_{(c, a)\in T}\neg(t_{c}^{a}\wedge\neg c))\|=1$ in $\mathcal{PL}_3^L$.
	\end{itemize}
	From the three cases above, for a given assignment $\|\cdot\|$, the value $\|a\|$ of any $a\in U$ satisfies complete semantics iff $\|a\leftrightarrow(\bigwedge_{(b, a)\in R}\neg(r_{b}^{a}\wedge b))\wedge(\bigwedge_{(c, a)\in T}\neg(t_{c}^{a}\wedge\neg c))\|=1$ in $\mathcal{PL}_3^L$. 
	Thus, an assignment $\|\cdot\|$ of the $HAFS$ satisfies complete semantics iff the assignment $\|\cdot\|$ is a model of the $ec_{HN}(HAFS)$ in $\mathcal{PL}_3^L$.
\end{proof}

\subsection{Encoding $HAFS$s to $\mathcal{PL}_{[0,1]}$s: fuzzy normal encoded semantics}
We first present the equational systems derived from fuzzy normal encoded semantics. Assume a $\mathcal{PL}_{[0,1]}$ equipped with connectives $\neg$, $\wedge$, and $\rightarrow$, where the three connectives are interpreted as the negation operation $N$, the t-norm operation $\ast$ and the R-implication operation $I_\ast$, respectively.

\begin{thm}\label{thm7}
	For an $HAFS=(A, R, T, U)$, an assignment $\|\cdot\|$ is a model of $ec_{HN}(HAFS)$ in the $\mathcal{PL}_{[0,1]}$ iff it is a solution of the equational system $Eq^{ec_{HN}}$:  $\forall a\in U,$
	\begin{equation}\label{eq-ec}
	\|a\| =\mathbf{N_b}\ast \mathbf{N_c},
\end{equation}
where $\mathbf{N_b}=N(\|b_1\|\ast\|r_{b_1}^a\|)\ast\dots\ast N(\|b_k\|\ast\|r_{b_k}^a\|)$, $\mathbf{N_c}=N(N(\|c_1\|)\ast\|t_{c_1}^a\|)\ast\dots\ast N(N(\|c_m\|)\ast\|t_{c_m}^a\|)$, $\{b_1, \dots , b_k \}$ is the set of all attackers of $a$, and $\{c_1, \dots , c_m \}$ is the set of all (necessary) supporters of $a$. 
\end{thm}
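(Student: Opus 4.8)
The plan is to reduce the single designated-value condition $\|ec_{HN}(HAFS)\|=1$ to the family of scalar equations indexed by $a\in U$, and then to evaluate each right-hand formula using the fuzzy operator interpretations. First I would record the two structural facts that drive the whole argument. Since the conjunction connective is interpreted by the t-norm $\ast$, and every t-norm satisfies $m\ast n \leq \min\{m,n\}$ together with $1\ast 1 = 1$, a finite t-norm product equals $1$ if and only if each factor equals $1$. Second, as already noted in the Preliminaries for R-implications, $\|\varphi\leftrightarrow\psi\|=1$ if and only if $\|\varphi\|=\|\psi\|$; this applies because the biconditional here is built from the R-implication $I_\ast$.

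Applying the first fact to the outermost conjunction $\bigwedge_{a\in U}(\cdots)$, I obtain that $\|\cdot\|$ is a model of $ec_{HN}(HAFS)$, i.e. $\|ec_{HN}(HAFS)\|=1$, if and only if for every $a\in U$ the corresponding biconditional factor evaluates to $1$. Writing $\Phi_a$ for the right-hand side of the $a$-th biconditional, namely $(\bigwedge_{(b,a)\in R}\neg(r_b^a\wedge b))\wedge(\bigwedge_{(c,a)\in T}\neg(t_c^a\wedge\neg c))$, the second fact then converts each such condition into the scalar equation $\|a\|=\|\Phi_a\|$.

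It then remains to compute $\|\Phi_a\|$ and to check that it equals $\mathbf{N_b}\ast\mathbf{N_c}$. Reading off the interpretations, $\|r_b^a\wedge b\|=\|b\|\ast\|r_b^a\|$ and hence $\|\neg(r_b^a\wedge b)\|=N(\|b\|\ast\|r_b^a\|)$; taking the t-norm product over all attackers $b_1,\dots,b_k$ of $a$ yields exactly $\mathbf{N_b}$. Symmetrically, since $\|\neg c\|=N(\|c\|)$, one gets $\|\neg(t_c^a\wedge\neg c)\|=N(N(\|c\|)\ast\|t_c^a\|)$, and the product over all supporters $c_1,\dots,c_m$ is $\mathbf{N_c}$. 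Associativity and commutativity of $\ast$ let me regroup the full conjunction as $\|\Phi_a\|=\mathbf{N_b}\ast\mathbf{N_c}$, so $\|a\|=\|\Phi_a\|$ is precisely Equation (\ref{eq-ec}). Consequently $\|\cdot\|$ is a model of $ec_{HN}(HAFS)$ iff it solves $Eq^{ec_{HN}}$.

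The obstacle is organizational rather than conceptual. I would handle with care the edge cases where $a$ has no attackers or no supporters, in which the corresponding empty t-norm product must be read as the unit $1$ (consistent with the convention that an empty conjunction takes the designated value), so that $\mathbf{N_b}$ or $\mathbf{N_c}$ collapses to $1$ and the equation still reads correctly. The one step where the general t-norm hypothesis is genuinely used — as opposed to a specific logic — is the inequality $m\ast n\leq\min\{m,n\}$, which is what licenses splitting the outer conjunction; I would make this explicit so that the decomposition into the per-element equations is rigorously justified.
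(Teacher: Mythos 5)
Your proposal is correct and follows essentially the same route as the paper's own proof: unwind the outer conjunction into one biconditional per element $a\in U$, use the R-implication property $\|\varphi\leftrightarrow\psi\|=1$ iff $\|\varphi\|=\|\psi\|$ to turn each into a scalar equation, and evaluate the right-hand side as $\mathbf{N_b}\ast\mathbf{N_c}$. Your explicit justifications — that a finite t-norm product equals $1$ iff every factor does (via $m\ast n\leq\min\{m,n\}$), and that empty products collapse to $1$ when $a$ has no attackers or supporters — are steps the paper leaves implicit, and they are welcome rigor rather than a different argument.
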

\begin{proof}
	An assignment $\|\cdot\|$ is a model of $ec_{HN}(HAFS)$ in the $\mathcal{PL}_{[0,1]}$\\
	$\Longleftrightarrow$
	\begin{equation*}
		\|\bigwedge_{a\in U}(a\leftrightarrow(\bigwedge_{(b_i, a)\in R}\neg(r_{b_i}^{a}\wedge b_i))\wedge(\bigwedge_{(c_j, a)\in T}\neg(\neg t_{c_j}^{a}\wedge c_j)))\|=1,
	\end{equation*}
	$\Longleftrightarrow$ for each $a \in U$, 
	\begin{equation*}
		\|a\leftrightarrow(\bigwedge_{(b_i, a)\in R}\neg(r_{b_i}^{a}\wedge b_i))\wedge(\bigwedge_{(c_j, a)\in T}\neg(\neg t_{c_j}^{a}\wedge c_j))\|=1,
	\end{equation*}
	$\Longleftrightarrow$ for each $a \in U$, 
	\begin{equation*}
		\|a\|=\|(\bigwedge_{(b_i, a)\in R}\neg(r_{b_i}^{a}\wedge b_i))\wedge(\bigwedge_{(c_j, a)\in T}\neg(\neg t_{c_j}^{a}\wedge c_j))\|,
	\end{equation*}
	i.e.,
	\begin{equation*}
			\|a\| =	\mathbf{N_b}\ast \mathbf{N_c}, 
		\end{equation*}
		where $\mathbf{N_b}=N(\|b_1\|\ast\|r_{b_1}^a\|)\ast\dots\ast N(\|b_k\|\ast\|r_{b_k}^a\|)$ and $\mathbf{N_c}=N(N(\|c_1\|)\ast\|t_{c_1}^a\|)\ast\dots\ast N(N(\|c_m\|)\ast\|t_{c_m}^a\|)$ and $\{b_1, \dots , b_k \}$ is the set of all attackers of $a$ and $\{c_1, \dots , c_m \}$ is the set of all (necessary) supporters of $a$. 
\end{proof}

\begin{defn}\label{defn29}
	For any $HAFS$, the equational system $Eq^{ec_{HN}}$ associated with a fuzzy normal encoded semantics is called the \emph{fuzzy normal encoded equational system} (abbreviated as the \emph{encoded equational system}).
\end{defn}
Without causing confusion, and based on Theorem \ref {thm7}, we also denote the fuzzy normal encoded semantics associated with the given $\mathcal{PL}_{[0,1]}$ by $Eq^{ec_{HN}}$.
If $N$ and $\ast$ are continuous, then the Equation \ref{eq-ec} is continuous, and we denote the associated continuous fuzzy normal encoded equational system as $Eq_{con}^{ec_{HN}}$. Then from the existence theorem of the solution of a continuous equational system \cite{tang2025encodingarg}, we have the existence theorem of the solution of $Eq_{con}^{ec_{HN}}$ below.
\begin{thm}
	There exists a solution of the continuous fuzzy normal encoded equational system $Eq_{con}^{ec_{HN}}$ for a given $HAFS$.
\end{thm}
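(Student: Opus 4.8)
The plan is to recognize Equation \ref{eq-ec} as a fixed-point equation on the hypercube $[0,1]^{|U|}$ and to invoke the existence theorem for continuous equational systems from \cite{tang2025encodingarg}. Writing the values $(\|a\|)_{a\in U}$ as a single vector $\mathbf{x}\in[0,1]^{|U|}$, the system $Eq_{con}^{ec_{HN}}$ takes the form $\mathbf{x}=F(\mathbf{x})$, where the $a$-th component of $F$ is the right-hand side $\mathbf{N_b}\ast \mathbf{N_c}$ of the equation for $\|a\|$. A solution of the system is precisely a fixed point of $F$, so it suffices to verify that $F$ meets the hypotheses of the cited existence theorem.

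First I would check that $F$ is a self-map of $[0,1]^{|U|}$. Each component $\mathbf{N_b}\ast \mathbf{N_c}$ is assembled solely from the negation $N$ and the t-norm $\ast$, both of which take values in $[0,1]$; hence every component lies in $[0,1]$ and $F$ maps $[0,1]^{|U|}$ into itself. Next I would verify continuity: since $N$ and $\ast$ are assumed continuous and each component of $F$ is a finite composition and $\ast$-combination of these operators applied to the coordinate projections $\mathbf{x}\mapsto\|b_i\|$, $\mathbf{x}\mapsto\|r_{b_i}^a\|$, $\mathbf{x}\mapsto\|c_j\|$, $\mathbf{x}\mapsto\|t_{c_j}^a\|$, it is continuous as a finite composition of continuous maps. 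Therefore $F$ is a continuous self-map of the nonempty, compact, convex set $[0,1]^{|U|}$.

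With these two properties established, the existence theorem of \cite{tang2025encodingarg} (whose content is essentially Brouwer's fixed-point theorem applied to a continuous self-map of a nonempty compact convex set) yields a fixed point $\mathbf{x}^{\ast}$ of $F$, which is exactly a solution of $Eq_{con}^{ec_{HN}}$. Should a self-contained argument be preferred, the same conclusion follows directly from Brouwer's fixed-point theorem once continuity and the self-map property are in hand.

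I do not expect a genuine obstacle here, since the result is an application of a pre-established theorem. The only point requiring care is the bookkeeping needed to confirm that $Eq_{con}^{ec_{HN}}$ really has the structural form demanded by that theorem---namely, a system of $|U|$ equations whose right-hand sides are continuous functions $[0,1]^{|U|}\to[0,1]$---which reduces precisely to the continuity and self-map checks above.
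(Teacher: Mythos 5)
Your proposal is correct and follows essentially the same route as the paper, which gives no detailed proof but simply invokes the existence theorem for continuous equational systems from \cite{tang2025encodingarg} after noting that continuity of $N$ and $\ast$ makes Equation \ref{eq-ec} continuous. Your additional verification that the right-hand sides define a continuous self-map of the compact convex cube $[0,1]^{|U|}$, so that the cited theorem (in essence Brouwer's fixed-point theorem) applies, merely makes explicit the bookkeeping the paper leaves implicit.
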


\begin{defn}
	The \emph{encoded equational function} associated with $Eq^{ec_{HN}}$ is defined as a function $h^{ec_{HN}}: [0, 1]^{2k+2m} \to [0, 1]$ ($k\geqslant 0$, $m\geqslant 0$),
	\begin{equation*}
	 h^{ec_{HN}}(x_1, x_1', \dots, x_k, x_k', y_1, y_1', \dots, y_m, y_m')=\mathbf{N_x}\ast \mathbf{N_y},
	\end{equation*}	
	 where $\mathbf{N_x}=N(x_1\ast x_1')\ast\dots\ast N(x_k\ast x_k')$ and $\mathbf{N_y}=N(N(y_1)\ast y_1')\ast\dots\ast N(N(y_m)\ast y_m')$. 
\end{defn}
Let $Dec=\{x_1, x_1', \dots, x_k, x_k', y_1', \dots, y_m'\}$ and $Inc=\{y_1, \dots, y_m\}$. It is easy to see that the equational function $h^{ec_{HN}}$ satisfies decreasing monotonicity w.r.t. any variable in $Dec$ and satisfies increasing monotonicity w.r.t. any variable in $Inc$.

Next we state and demonstrate some properties of the encoded equational function $h^{ec_{HN}}$.

\begin{thm}\label{}
	The encoded equational function $h^{ec_{HN}}$ satisfies the boundary conditions and the symmetry:\\
    (a) $h^{ec_{HN}}(x_1, x_1', \dots, x_k, x_k', y_1, y_1', \dots, y_m, y_m') = 1$, if $[\forall x_i\in \{x_1, \dots, x_k\}: \|x_i\|=0 \text{ or } \|x_i'\|=0] \text{ and } [\forall y_j\in \{y_1, \dots, y_m\}: \|y_j\|=1 \text{ or } \|y_j'\|=0]$\\
	(b) $h^{ec_{HN}}(x_1, x_1', \dots, x_k, x_k', y_1, y_1', \dots, y_m, y_m') = 0$, if $[\exists x_i\in \{x_1, \dots, x_k\}: \|x_i\|=1 \text{ and } \|x_i'\|=1] \text{ or } [\exists  y_j\in \{y_1, \dots, y_m\}: \|y_j\|=0 \text{ and } \|y_j'\|=1]$.\\
	(c) \begin{align*}
		&h^{ec_{HN}}(x_1, x_1', \dots, x_k, x_k', y_1, y_1', \dots, y_m, y_m')\\ 
		=& h^{ec_{HN}}(x_{\sigma(1)}, x_{\sigma(1)}', \dots, x_{\sigma(k)}, x_{\sigma(k)}', y_{\tau(1)}, y_{\tau(1)}', \dots, y_{\tau(m)}, y_{\tau(m)}')
	\end{align*} for any permutation $\sigma$ of $\{1, \dots, k\}$ and any permutation $\tau$ of $\{1, \dots, m\}$.
\end{thm}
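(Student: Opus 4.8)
The plan is to verify all three properties by direct evaluation, using only the defining algebraic facts about the standard negation $N$ and the t-norm $\ast$: that $N(0)=1$ and $N(1)=0$, that $0$ is absorbing for $\ast$ (so $0\ast n=n\ast 0=0$), that $1$ is neutral (so $1\ast n=n\ast 1=n$), and that $\ast$ is commutative and associative. No monotonicity or continuity is invoked, so the preceding monotonicity observation plays no role here; throughout I read the hypotheses $\|x_i\|=0$, $\|y_j\|=1$, etc., simply as conditions on the numerical arguments $x_i,x_i',y_j,y_j'$ of $h^{ec_{HN}}$.

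For part (a), I would argue that under the hypotheses every factor of $\mathbf{N_x}$ and of $\mathbf{N_y}$ collapses to $1$. In a factor $N(x_i\ast x_i')$ of $\mathbf{N_x}$, either $x_i=0$ or $x_i'=0$ forces $x_i\ast x_i'=0$ by absorption, so $N(x_i\ast x_i')=N(0)=1$. In a factor $N(N(y_j)\ast y_j')$ of $\mathbf{N_y}$, the case $y_j=1$ gives $N(y_j)=0$ while the case $y_j'=0$ gives the second operand $0$; in either case $N(y_j)\ast y_j'=0$, so the factor equals $N(0)=1$. With every factor equal to $1$ and $1$ neutral for $\ast$, I conclude $\mathbf{N_x}=\mathbf{N_y}=1$ and hence $h^{ec_{HN}}=1\ast 1=1$.

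For part (b), I would dually produce a single vanishing factor and invoke absorption. If $x_i=1$ and $x_i'=1$ for some $i$, then $x_i\ast x_i'=1$ and $N(x_i\ast x_i')=N(1)=0$, so $\mathbf{N_x}=0$ and $h^{ec_{HN}}=0\ast\mathbf{N_y}=0$. Symmetrically, if $y_j=0$ and $y_j'=1$ for some $j$, then $N(y_j)=1$, $N(y_j)\ast y_j'=1\ast 1=1$, and the factor $N(N(y_j)\ast y_j')=N(1)=0$ forces $\mathbf{N_y}=0$, whence $h^{ec_{HN}}=0$ again. For part (c), I would observe that $\sigma$ merely reorders the factors $N(x_i\ast x_i')$ inside the $\ast$-product $\mathbf{N_x}$ and $\tau$ reorders the factors $N(N(y_j)\ast y_j')$ inside $\mathbf{N_y}$, each indexed pair being moved as a single block; since by commutativity and associativity the value of a finite $\ast$-product is independent of the ordering of its operands, both subproducts, and therefore $h^{ec_{HN}}=\mathbf{N_x}\ast\mathbf{N_y}$, are left unchanged.

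There is no genuine obstacle in this theorem: everything reduces to the t-norm axioms together with the two boundary values of $N$. The only points deserving care are purely bookkeeping—verifying in part (a) that \emph{both} disjuncts of each hypothesis ($x_i=0$ versus $x_i'=0$, and $y_j=1$ versus $y_j'=0$) send the relevant argument of $N$ to $0$, and stressing in part (c) that the permutations act on indexed pairs rather than on individual coordinates, so that no factor $N(x_i\ast x_i')$ is ever split apart.
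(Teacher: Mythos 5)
Your proposal is correct and follows essentially the same route as the paper: in (a) and (b) you evaluate each factor $N(x_i\ast x_i')$ and $N(N(y_j)\ast y_j')$ using the boundary values $N(0)=1$, $N(1)=0$ together with $0$ being absorbing and $1$ neutral for the t-norm, and in (c) you invoke commutativity and associativity, exactly as the paper does. Your explicit remarks that the permutations act on indexed pairs as blocks and that both disjuncts of each hypothesis must be checked are sound bookkeeping points, but they do not change the argument.
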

\begin{proof}
	(a) $[\forall x_i\in \{x_1, \dots, x_k\}: \|x_i\|=0 \text{ or } \|x_i'\|=0] \text{ and } [\forall y_j\in \{y_1, \dots,$ $y_m\}: \|y_j\|=1 \text{ or } \|y_j'\|=0]$
	\\$\Longrightarrow$ $x_1\ast x_1'=\dots= x_k\ast x_k'= N(y_1)\ast y_1'=\dots= N(y_m)\ast y_m'=0$
	\\$\Longleftrightarrow$ $N(x_1\ast x_1')=\dots= N(x_k\ast x_k')= N(N(y_1)\ast y_1')=\dots= N(N(y_m)\ast y_m')=1$
	\\$\Longleftrightarrow$ $N(x_1\ast x_1')\ast\dots\ast N(x_k\ast x_k')\ast N(N(y_1)\ast y_1')\ast\dots\ast N(N(y_m)\ast y_m')=1$
	\\$\Longleftrightarrow$ $h^{ec_{HN}}(x_1, x_1', \dots, x_k, x_k', y_1, y_1', \dots, y_m, y_m') = 1$.
	\\(b) $\exists x_i\in \{x_1, \dots, x_k\}: \|x_i\|=1$ and $\|x_i'\|=1$ or $\exists  y_j\in \{y_1, \dots, y_m\}: \|y_j\|=0$ and $\|y_j'\|=1$ 
	\\$\Longleftrightarrow$ $\exists x_i\in \{x_1, \dots, x_k\}: x_i\ast x_i'=1$ or $\exists  y_j\in \{y_1, \dots, y_m\}: N(y_j)\ast y_j'=1$
	\\$\Longleftrightarrow$ $\exists x_i\in \{x_1, \dots, x_k\}: N(x_i\ast x_i')=0$ or $\exists  y_j\in \{y_1, \dots, y_m\}: N(N(y_j)\ast y_j')=0$
	\\$\Longleftrightarrow$ $h^{ec_{HN}}(x_1, x_1', \dots, x_k, x_k', y_1, y_1', \dots, y_m, y_m') = 0$
	\\(c) By the commutativity and the associativity of any t-norm.
\end{proof}
Obviously, the encoded equational function $h^{ec_{HN}}$ may not be continuous since $N$ and $\ast$ may not be continuous. However, if $N$ and $\ast$ are continuous, then the encoded equational function $h^{ec_{HN}}$ will be continuous.

\subsection{Encoding $HAFS$s to three important $\mathcal{PL}_{[0, 1]}$s}
In this subsection, we will encode $AF$s to three specific $\mathcal{PL}_{[0, 1]}s$ ($\mathcal{PL}_{[0, 1]}^G$, $\mathcal{PL}_{[0, 1]}^P$ and $\mathcal{PL}_{[0, 1]}^L$).

We first present the equational system $Eq^G$:  $\forall a\in U,$
\begin{equation*}
	\|a\| =\min\{\min_{i=1}^k\max\{1-\|b_i\|, 1-\|r_{b_i}^a\|\}, \min_{i=1}^m\max\{\|c_i\|, 1-\|t_{c_i}^a\|\}\},
\end{equation*}
where $\{b_1, \dots , b_k \}$ is the set of all attackers of $a$, and $\{c_1, \dots , c_m \}$ is the set of all (necessary) supporters of $a$. 
\begin{thm}
	An assignment is a model of an $HAFS$ under equational semantics $Eq^G$ iff it is a model of $ec_{HN}(HAFS)$ in $\mathcal{PL}_{[0, 1]}^G$. 
\end{thm}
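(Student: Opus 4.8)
The plan is to recognize this statement as the specialization of Theorem~\ref{thm7} to the Gödel logic $\mathcal{PL}_{[0,1]}^G$, so that the entire argument reduces to substituting the Gödel operators into the general encoded equational system $Eq^{ec_{HN}}$ and simplifying. First I would invoke Theorem~\ref{thm7}, which already establishes that an assignment $\|\cdot\|$ is a model of $ec_{HN}(HAFS)$ in a generic $\mathcal{PL}_{[0,1]}$ iff it is a solution of $\|a\| = \mathbf{N_b} \ast \mathbf{N_c}$ for every $a \in U$, where $\ast$ is the t-norm and $N$ the negation of that logic. It therefore suffices to show that, upon instantiating $N(x) = 1-x$ and $\ast = T_G = \min$, this equation becomes exactly the defining equation of $Eq^G$.

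The core computation proceeds termwise. For the attack block $\mathbf{N_b}$, each factor $N(\|b_i\| \ast \|r_{b_i}^a\|)$ becomes $1 - \min\{\|b_i\|, \|r_{b_i}^a\|\}$, which by the De Morgan identity $1 - \min\{x,y\} = \max\{1-x,\,1-y\}$ equals $\max\{1 - \|b_i\|,\, 1 - \|r_{b_i}^a\|\}$; since the t-norm joining these factors is again $\min$, we obtain $\mathbf{N_b} = \min_{i=1}^k \max\{1 - \|b_i\|,\, 1 - \|r_{b_i}^a\|\}$. For the support block $\mathbf{N_c}$, each factor $N(N(\|c_i\|) \ast \|t_{c_i}^a\|)$ expands to $1 - \min\{1 - \|c_i\|,\, \|t_{c_i}^a\|\}$, and the same De Morgan step yields $\max\{\|c_i\|,\, 1 - \|t_{c_i}^a\|\}$, so $\mathbf{N_c} = \min_{i=1}^m \max\{\|c_i\|,\, 1 - \|t_{c_i}^a\|\}$. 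Finally, $\mathbf{N_b} \ast \mathbf{N_c} = \min\{\mathbf{N_b}, \mathbf{N_c}\}$ reproduces precisely the right-hand side of $Eq^G$.

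Since each of these rewrites is a logical equivalence (indeed an identity of real-valued functions on $[0,1]$), the per-$a$ equations of $Eq^G$ hold iff the per-$a$ equations of the Gödel-instantiated $Eq^{ec_{HN}}$ hold, and the theorem follows by chaining this with Theorem~\ref{thm7}. I do not anticipate a genuine obstacle here: the statement is essentially a verification that the abstract operators collapse to the Gödel forms. The only point demanding minor care is the nested negation in the support term, where one must apply $N$ to $N(\|c_i\|) \ast \|t_{c_i}^a\|$ rather than to $\|c_i\| \ast \|t_{c_i}^a\|$ --- getting the order of negation and t-norm right is what makes the supporter factor simplify to $\max\{\|c_i\|,\, 1 - \|t_{c_i}^a\|\}$ (``a support is respected when either the supporter holds or the support link fails'') rather than to an attack-style expression.
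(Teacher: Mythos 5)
Your proposal is correct and follows essentially the same route as the paper's proof: both invoke Theorem~\ref{thm7} and then instantiate $N$ as the standard negation and $\ast$ as the G\"{o}del t-norm, simplifying $\mathbf{N_b}$ and $\mathbf{N_c}$ termwise via the identity $1-\min\{x,y\}=\max\{1-x,1-y\}$ to recover exactly the defining equations of $Eq^G$. Your careful handling of the nested negation in the supporter factor, yielding $\max\{\|c_i\|,\,1-\|t_{c_i}^a\|\}$, matches the paper's computation of $\mathbf{N_c^G}$.
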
 
\begin{proof}
	Let the $\mathcal{PL}_{[0, 1]}$ in Theorem \ref{thm7} be $\mathcal{PL}_{[0, 1]}^G$ with the standard negation, the G\"{o}del t-norm $\ast$, and the R-implication $I_\ast$. Then from Theorem \ref{thm7} for an $HAFS=(A, R, T, U)$, an assignment $\|\cdot\|$ is a model of $ec_{HN}(HAFS)$ in $\mathcal{PL}_{[0, 1]}^G$ iff it is a solution of the equational system: $\forall a\in U$,
	\begin{equation*}
		\|a\| =\mathbf{N_b^G}\ast \mathbf{N_c^G},
	\end{equation*}
	where $\{b_1, \dots , b_k \}$ is the set of all attackers of $a$, $\{c_1, \dots , c_m \}$ is the set of all (necessary) supporters of $a$, and we have that
	\begin{align*}
		\mathbf{N_b^G}
		&=N(\|b_1\|\ast\|r_{b_1}^a\|)\ast\dots\ast N(\|b_k\|\ast\|r_{b_k}^a\|)\\
		&=\min_{i=1}^k(1-\min\{\|b_i\|, \|r_{b_i}^a\|\})\\
		&=\min_{i=1}^k\max\{1-\|b_i\|, 1-\|r_{b_i}^a\|\}\\
	\end{align*}
	and
	\begin{align*}
		\mathbf{N_c^G}
		&=N(N(\|c_1\|)\ast\|t_{c_1}^a\|)\ast\dots\ast N(N(\|c_m\|)\ast\|t_{c_m}^a\|)\\
		&=\min_{i=1}^m(1-\min\{1-\|c_i\|, \|t_{c_i}^a\|\})\\
		&=\min_{i=1}^m\max\{\|c_i\|, 1-\|t_{c_i}^a\|\}.\\
	\end{align*}
Thus we have that
\begin{align*}
	\mathbf{N_b^G}\ast \mathbf{N_c^G}=\min\{\min_{i=1}^k\max\{1-\|b_i\|, 1-\|r_{b_i}^a\|\}, \min_{i=1}^m\max\{\|c_i\|, 1-\|t_{c_i}^a\|\}\}.
\end{align*}

	Thus, an assignment $\|\cdot\|$ is a model of $ec_{HN}(HAFS)$ in $\mathcal{PL}_{[0, 1]}^G$
	 \\$\Longleftrightarrow$ it is a solution of the equational system: $\forall a\in U$,
\begin{equation*}
	\|a\|=\min\{\min_{i=1}^k\max\{1-\|b_i\|, 1-\|r_{b_i}^a\|\}, \min_{i=1}^m\max\{\|c_i\|, 1-\|t_{c_i}^a\|\}\},
\end{equation*}
where $\{b_1, \dots , b_k \}$ is the set of all attackers of $a$, and $\{c_1, \dots , c_m \}$ is the set of all (necessary) supporters of $a$
	\\$\Longleftrightarrow$ it is a model of the $HAFS$ under equational semantics $Eq^G$.
\end{proof}

From this theorem, we immediately obtain the following corollary.
\begin{cor}
	The equational semantics $Eq^G$ is translatable via $ec_{HN}$ and $\mathcal{PL}_{[0, 1]}^G$, i.e., it is equivalent to a fuzzy normal encoded semantics associated with $\mathcal{PL}_{[0, 1]}^G$.
\end{cor}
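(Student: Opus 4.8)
The plan is to read this as an essentially immediate corollary of the theorem proved just above it, and to make the short deduction explicit by unwinding the definitions of \emph{translation} and \emph{translatable}. First I would recall that a translation of $\mathcal{HAFS}$ under a given semantics is an encoding function $tr$ whose models in the associated $\mathcal{PLS}$ are in one-to-one correspondence with the models of the $HAFS$ under that semantics, and that a semantics is called translatable precisely when such a translation exists. Hence it suffices to exhibit one translation for $Eq^G$, and the natural candidate is $ec_{HN}$ itself together with the logic $\mathcal{PL}_{[0,1]}^G$.

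The core step is to invoke the immediately preceding theorem, which states that for every $HAFS=(A, R, T, U)$ an assignment $\|\cdot\|$ is a model of the $HAFS$ under $Eq^G$ if and only if it is a model of $ec_{HN}(HAFS)$ in $\mathcal{PL}_{[0,1]}^G$. I would read this biconditional at the level of the whole assignment space over $U$: it says exactly that the set of $Eq^G$-models of the $HAFS$ and the set of $\mathcal{PL}_{[0,1]}^G$-models of $ec_{HN}(HAFS)$ are one and the same subset of assignments. From set equality the desired one-to-one correspondence follows trivially, since the identity map on that common subset is a bijection of it onto itself.

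I would then conclude by spelling out the correspondence as the identity map on assignments: each $\|\cdot\|$ that models $ec_{HN}(HAFS)$ is, by the theorem, the very same assignment that models the $HAFS$ under $Eq^G$, and conversely. At this point I would explicitly appeal to the convention already fixed in the paper that ``assignment'' and ``labelling'' are interchangeable, so that no relabelling is needed and the bijection is literally the identity rather than some auxiliary map. This exhibits $ec_{HN}$ as a translation of $\mathcal{HAFS}$ under $Eq^G$ via $\mathcal{PL}_{[0,1]}^G$, whence $Eq^G$ is translatable; the asserted equivalence with the fuzzy normal encoded semantics associated with $\mathcal{PL}_{[0,1]}^G$ is merely a restatement of this coincidence of models, recalling that the encoded semantics $Eq^{ec_{HN}}$ is by Definition~\ref{defn13} defined through the models of $ec_{HN}(HAFS)$.

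There is no genuine obstacle here: all the mathematical content lives in the preceding theorem, and the corollary only repackages its biconditional as a bijection of model sets. The single point meriting a word of care is to justify that the correspondence may be taken to be the identity and not a more elaborate map; this is exactly what the assignment/labelling interchangeability secures, and beyond that the argument is a direct reading of the definitions of translation and translatability.
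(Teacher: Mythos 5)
Your proposal is correct and follows exactly the paper's route: the paper also treats this corollary as an immediate consequence of the preceding model-equivalence theorem, with the one-to-one correspondence being the identity on the common set of assignments once the definitions of \emph{translation}, \emph{translatable}, and the encoded semantics (Definition~\ref{defn13}) are unwound. Nothing further is needed.
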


Next, we present the equational semantics $Eq^P$ and show the theorem of model equivalence between $Eq^P$ and the fuzzy normal encoded semantics associated with $\mathcal{PL}_{[0, 1]}^P$.

We give the equational system $Eq^P$:  $\forall a\in U,$
\begin{equation*}
	\|a\|=\prod_{i=1}^k(1-\|b_i\|\|r_{b_i}^a\|)\prod_{i=1}^m(1-\|t_{c_i}^a\|+\|c_i\|\|t_{c_i}^a\|),
\end{equation*}
where $\{b_1, \dots , b_k \}$ is the set of all attackers of $a$, and $\{c_1, \dots , c_m \}$ is the set of all (necessary) supporters of $a$. 
\begin{thm}
	An assignment is a model of an $HAFS$ under equational semantics $Eq^P$ iff it is a model of $ec_{HN}(HAFS)$ in $\mathcal{PL}_{[0, 1]}^P$. 
\end{thm}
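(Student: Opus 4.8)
The plan is to follow exactly the same route as the preceding Gödel case, reducing everything to a direct instantiation of Theorem~\ref{thm7} followed by an algebraic simplification. First I would specialize the generic $\mathcal{PL}_{[0,1]}$ of Theorem~\ref{thm7} to $\mathcal{PL}_{[0,1]}^P$, so that the negation $N$ is the standard negation $N(m)=1-m$, the t-norm $\ast$ is the Product t-norm $T_P(m,n)=m\cdot n$, and the implication is $I_P$. Theorem~\ref{thm7} then immediately yields that an assignment $\|\cdot\|$ is a model of $ec_{HN}(HAFS)$ in $\mathcal{PL}_{[0,1]}^P$ if and only if it solves, for every $a\in U$, the equation $\|a\|=\mathbf{N_b}\ast\mathbf{N_c}$, with $\mathbf{N_b}$ and $\mathbf{N_c}$ the attack- and support-factors of that theorem.

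The substance of the argument is then a purely computational rewriting of $\mathbf{N_b}$ and $\mathbf{N_c}$ under these concrete operators. For the attack factor, each conjunct $N(\|b_i\|\ast\|r_{b_i}^a\|)$ becomes $1-\|b_i\|\,\|r_{b_i}^a\|$, and since $\ast$ is multiplication the whole factor collapses to $\mathbf{N_b}=\prod_{i=1}^k(1-\|b_i\|\,\|r_{b_i}^a\|)$. For the support factor I must expand the nested negation: each conjunct is $N\bigl(N(\|c_i\|)\ast\|t_{c_i}^a\|\bigr)=1-(1-\|c_i\|)\|t_{c_i}^a\|=1-\|t_{c_i}^a\|+\|c_i\|\,\|t_{c_i}^a\|$, so that multiplying these gives $\mathbf{N_c}=\prod_{i=1}^m(1-\|t_{c_i}^a\|+\|c_i\|\,\|t_{c_i}^a\|)$. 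Combining the two factors through the product t-norm, $\mathbf{N_b}\ast\mathbf{N_c}=\mathbf{N_b}\cdot\mathbf{N_c}$, reproduces precisely the right-hand side of $Eq^P$.

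Chaining these equalities through the biconditional supplied by Theorem~\ref{thm7} then gives that $\|\cdot\|$ is a model of $ec_{HN}(HAFS)$ in $\mathcal{PL}_{[0,1]}^P$ exactly when it solves $Eq^P$, i.e., when it is a model of the $HAFS$ under equational semantics $Eq^P$. I do not expect a genuine obstacle: the proof is a transcription of the Gödel case with $\min$/$\max$ replaced by products, and the only point demanding a little care is the expansion of the doubly negated support conjunct $N\bigl(N(\|c_i\|)\ast\|t_{c_i}^a\|\bigr)$ into $1-\|t_{c_i}^a\|+\|c_i\|\,\|t_{c_i}^a\|$, which must be performed before the outer product is formed.
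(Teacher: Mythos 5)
Your proposal is correct and follows essentially the same route as the paper's own proof: it instantiates Theorem~\ref{thm7} with the standard negation and Product t-norm, then algebraically rewrites $\mathbf{N_b}$ as $\prod_{i=1}^k(1-\|b_i\|\,\|r_{b_i}^a\|)$ and $\mathbf{N_c}$ as $\prod_{i=1}^m(1-\|t_{c_i}^a\|+\|c_i\|\,\|t_{c_i}^a\|)$ before combining them by multiplication, exactly as the paper does. The expansion of the doubly negated support conjunct that you flag as the delicate step is precisely the computation the paper carries out.
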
 
\begin{proof}
	Let the $\mathcal{PL}_{[0, 1]}$ in Theorem \ref{thm7} be $\mathcal{PL}_{[0, 1]}^P$ with the standard negation, the Product t-norm $\ast$, and the R-implication $I_\ast$. Then from Theorem \ref{thm7} for an $HAFS=(A, R, T, U)$, an assignment $\|\cdot\|$ is a model of $ec_{HN}(HAFS)$ in $\mathcal{PL}_{[0, 1]}^P$ iff it is a solution of the equational system: $\forall a\in U$,
	\begin{equation*}
		\|a\| =\mathbf{N_b^P}\ast \mathbf{N_c^P},
	\end{equation*}
	where $\{b_1, \dots , b_k \}$ is the set of all attackers of $a$, $\{c_1, \dots , c_m \}$ is the set of all (necessary) supporters of $a$, and we have that
	\begin{align*}
		\mathbf{N_b^P}
		&=N(\|b_1\|\ast\|r_{b_1}^a\|)\ast\dots\ast N(\|b_k\|\ast\|r_{b_k}^a\|)\\
		&=\prod_{i=1}^k(1-\|b_i\|\|r_{b_i}^a\|)\\
	\end{align*}
	and
	\begin{align*}
		\mathbf{N_c^P}
		&=N(N(\|c_1\|)\ast\|t_{c_1}^a\|)\ast\dots\ast N(N(\|c_m\|)\ast\|t_{c_m}^a\|)\\
		&=\prod_{i=1}^m(1-(1-\|c_i\|)\|t_{c_i}^a\|)\\
		&=\prod_{i=1}^m(1-\|t_{c_i}^a\|+\|c_i\|\|t_{c_i}^a\|).\\
	\end{align*}
	Thus we have that
	\begin{align*}
		\mathbf{N_b^P}\ast \mathbf{N_c^P}=\prod_{i=1}^k(1-\|b_i\|\|r_{b_i}^a\|)\prod_{i=1}^m(1-\|t_{c_i}^a\|+\|c_i\|\|t_{c_i}^a\|).
	\end{align*}
	
	Thus, an assignment $\|\cdot\|$ is a model of $ec_{HN}(HAFS)$ in $\mathcal{PL}_{[0, 1]}^P$
	\\$\Longleftrightarrow$ it is a solution of the equational system: $\forall a\in U$,
	\begin{equation*}
		\|a\|=\prod_{i=1}^k(1-\|b_i\|\|r_{b_i}^a\|)\prod_{i=1}^m(1-\|t_{c_i}^a\|+\|c_i\|\|t_{c_i}^a\|),
	\end{equation*}
	where $\{b_1, \dots , b_k \}$ is the set of all attackers of $a$, and $\{c_1, \dots , c_m \}$ is the set of all (necessary) supporters of $a$
	\\$\Longleftrightarrow$ it is a model of the $HAFS$ under equational semantics $Eq^P$.
\end{proof}

From this theorem, we have the corollary below. 
\begin{cor}
	The equational semantics $Eq^P$ is translatable via $ec_{HN}$ and $\mathcal{PL}_{[0, 1]}^P$, i.e., it is equivalent to a fuzzy normal encoded semantics associated with $\mathcal{PL}_{[0, 1]}^P$.
\end{cor}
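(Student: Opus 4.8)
The plan is to exhibit an explicit translation and then invoke the theorem proved immediately above. I would take the candidate translation to be $ec_{HN}$ itself, i.e. set $tr := ec_{HN}$, and argue that this encoding function already satisfies the defining requirement of a translation of $\mathcal{HAFS}$ under the equational semantics $Eq^P$. The whole corollary is thus a short unwinding of the definitions of encoded semantics, translation, and translatability (Definition \ref{defn13} and the subsequent ones), fed by the biconditional just established.

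First I would recall the structural setup fixed at the start of the encoding section: by construction the propositional variables of $\mathcal{PL}_{[0,1]}^P$ are in one-to-one correspondence with the arguments, attacks and supports of any given $HAFS$, so the set of propositional variables occurring in $ec_{HN}(HAFS)$ is precisely $U$. Consequently an assignment $\|\cdot\|$ over these variables is the very same object as a labelling of the $HAFS$ — the paper already declares ``assignment'' and ``labelling'' interchangeable — and hence the models of the $HAFS$ under $Eq^P$ and the models of $ec_{HN}(HAFS)$ in $\mathcal{PL}_{[0,1]}^P$ range over one and the same domain of assignments.

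Next I would apply the preceding theorem, which asserts that an assignment $\|\cdot\|$ is a model of the $HAFS$ under $Eq^P$ if and only if it is a model of $ec_{HN}(HAFS)$ in $\mathcal{PL}_{[0,1]}^P$. Since both model sets are subsets of the same domain of assignments and the theorem gives a biconditional holding for each fixed $\|\cdot\|$, these two model sets are in fact equal. Equality is a particular one-to-one correspondence, namely the identity map on assignments, so $ec_{HN}$ meets the definition of a translation of $\mathcal{HAFS}$ under $Eq^P$; therefore $Eq^P$ is translatable, and the claimed equivalence to a fuzzy normal encoded semantics associated with $\mathcal{PL}_{[0,1]}^P$ is exactly the content of Definition \ref{defn13} instantiated at this $\mathcal{PL}_{[0,1]}$.

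The only point requiring care — the closest thing to an obstacle — is to confirm that the definition of ``translation'' is discharged by a set equality rather than demanding a genuinely nontrivial bijection. I would make explicit that the required one-to-one correspondence may be taken to be the identity on assignments, precisely because the biconditional of the theorem is stated for one fixed $\|\cdot\|$ at a time, so no reindexing or transformation of models is ever needed. Everything beyond this observation is routine bookkeeping with the definitions.
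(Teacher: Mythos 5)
Your proposal is correct and matches the paper's intent exactly: the paper derives this corollary immediately from the preceding model-equivalence theorem, with the one-to-one correspondence being the identity on assignments, which is precisely the unwinding of the definitions you give. Your extra care about the variables of $ec_{HN}(HAFS)$ coinciding with $U$ and the identity serving as the required bijection only makes explicit what the paper leaves implicit.
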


By letting the $\mathcal{PL}_{[0, 1]}$ in Theorem \ref{thm7} be $\mathcal{PL}_{[0, 1]}^L$ (equipped with the standard negation, the {\L}ukasiewicz t-norm $\ast$, and the R-implication $I_\ast$), we obtain an equational system $Eq^L$ as an instantiation of $Eq^{ec_{HN}}$. The corresponding model equivalence theorem follows from this specification. Since the equational system \(Eq^L\) has a complex specific form, we refrain from presenting its details. Interested readers can derive it by themselves.

\subsection{Relationships between complete semantics and fuzzy normal encoded semantics $Eq^{ec_{HN}}$}
Exploring the relationships between complete semantics and fuzzy normal encoded semantics can help us find the connections between 3-valued semantics and [0,1]-valued semantics. It can also help us choose proper $\mathcal{PL}_{[0,1]}$s for encoding when we want to align the fuzzy normal encoded semantics with the complete semantics in an expected way. Additionally, the model equivalence between the complete semantics and fuzzy normal encoded semantics (with a technical treatment) gives us a new approach to calculating the models of the complete semantics.
\subsubsection{The relationship (I) between complete semantics and $Eq^{ec_{HN}}$ with zero-divisor free t-norms}
Suppose that the $\mathcal{PL}_{[0,1]}^\circledcirc$ in Theorem \ref{thm7} is equipped with a negation $N$, a zero-divisor-free t-norm $\circledcirc$ and an R-implication $I_\circledcirc$. We denote the corresponding encoded semantics by $Eq_\circledcirc^{ec_{HN}}$: $\forall a\in U$,
\begin{equation*}
	\|a\| =\mathbf{N_b}\circledcirc \mathbf{N_c},
\end{equation*}
where $\mathbf{N_b}=N(\|b_1\|\circledcirc\|r_{b_1}^a\|)\circledcirc\dots\circledcirc N(\|b_k\|\circledcirc\|r_{b_k}^a\|)$, $\mathbf{N_c}=N(N(\|c_1\|)\circledcirc\|t_{c_1}^a\|)\circledcirc\dots\circledcirc N(N(\|c_m\|)\circledcirc\|t_{c_m}^a\|)$, $\{b_1, \dots , b_k \}$ is the set of all attackers of $a$, and $\{c_1, \dots , c_m \}$ is the set of all (necessary) supporters of $a$. 

\begin{thm}\label{thm16}
	For an $HAFS$ and an encoded semantics $Eq_\circledcirc^{ec_{HN}}$, if an assignment $\|\cdot\|$ is a model of the $HAFS$ under the $Eq_\circledcirc^{ec_{HN}}$, then $T_3(\|\cdot\|)$ is a model of the $HAFS$ under the complete semantics.
\end{thm}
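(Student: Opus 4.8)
The plan is to verify that the ternarized assignment $\|\cdot\|_3 := T_3(\|\cdot\|)$ satisfies the three defining biconditionals of the adjacent complete labelling in Definition~\ref{adjacent}. Since $T_3$ sends $0\mapsto 0$, $1\mapsto 1$ and every value of the open interval $(0,1)$ to $\frac12$, we have $\|a\|_3=1 \Longleftrightarrow \|a\|=1$, $\|a\|_3=0 \Longleftrightarrow \|a\|=0$, and $\|a\|_3=\frac12 \Longleftrightarrow 0<\|a\|<1$. Because $\|\cdot\|_3$ is already a total function into $\{0,\frac12,1\}$ and the three clauses of Definition~\ref{adjacent} are mutually exclusive and jointly exhaustive, it suffices to prove, for every $a\in U$, the two equivalences: (A) $\|a\|=1$ iff $[\forall (b,a)\in R:\|b\|_3=0 \text{ or } \|r_b^a\|_3=0]$ and $[\forall (c,a)\in T:\|c\|_3=1 \text{ or } \|t_c^a\|_3=0]$; and (B) $\|a\|=0$ iff $[\exists (b,a)\in R:\|b\|_3=1 \text{ and } \|r_b^a\|_3=1]$ or $[\exists (c,a)\in T:\|c\|_3=0 \text{ and } \|t_c^a\|_3=1]$. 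Once (A) and (B) hold, the $\frac12$-clause follows automatically, since $\|a\|_3=\frac12$ then amounts exactly to the negation of both bracketed conditions, i.e. the ``otherwise'' case.

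For (A), I would start from the equation $\|a\|=\mathbf{N_b}\circledcirc\mathbf{N_c}$ supplied by Theorem~\ref{thm7}. Using the t-norm bound $x\circledcirc y\le\min\{x,y\}$, a $\circledcirc$-product equals $1$ iff each of its arguments equals $1$; applying this twice forces every factor of $\mathbf{N_b}$ and of $\mathbf{N_c}$ to be $1$. Each such factor has the form $N(\cdot)$, and $N(x)=1 \Longleftrightarrow x=0$, so $\|b_i\|\circledcirc\|r_{b_i}^a\|=0$ and $N(\|c_i\|)\circledcirc\|t_{c_i}^a\|=0$ for all $i$. Here zero-divisor freeness converts each product-vanishing into a disjunction of factor-vanishings: $\|b_i\|=0$ or $\|r_{b_i}^a\|=0$, and $N(\|c_i\|)=0$ or $\|t_{c_i}^a\|=0$. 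Finally $N(\|c_i\|)=0 \Longleftrightarrow \|c_i\|=1$, and reading ``$=0$'' as $\|\cdot\|_3=0$ and ``$=1$'' as $\|\cdot\|_3=1$ yields precisely the two bracketed conditions of (A). The empty-product case, when $a$ has no attackers or supporters, gives $\mathbf{N_b}=\mathbf{N_c}=1$, matching the vacuous truth noted after Definition~\ref{adjacent}.

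For (B), I would again use zero-divisor freeness, now in the ``only some factor vanishes'' direction: $\mathbf{N_b}\circledcirc\mathbf{N_c}=0$ iff $\mathbf{N_b}=0$ or $\mathbf{N_c}=0$, and each such $\circledcirc$-product of $N(\cdot)$-factors is $0$ iff at least one factor is $0$. Since $N(x)=0 \Longleftrightarrow x=1$, a vanishing factor forces $\|b_i\|\circledcirc\|r_{b_i}^a\|=1$ (resp. $N(\|c_i\|)\circledcirc\|t_{c_i}^a\|=1$); and a $\circledcirc$-product equals $1$ iff both arguments do, giving $\|b_i\|=\|r_{b_i}^a\|=1$ (resp. $\|c_i\|=0$ and $\|t_{c_i}^a\|=1$). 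Translating through $T_3$ reproduces the disjunctive condition of (B).

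The main obstacle, and really the conceptual heart of the statement, is the role played by zero-divisor freeness. It is exactly this hypothesis that licenses the equivalences ``$x\circledcirc y=0 \Longleftrightarrow x=0 \text{ or } y=0$,'' which are used in (A) to split the attack and support factors and in (B) to localize a vanishing product to a single factor. Without it, a product of strictly positive terms could collapse to $0$ (or, through the negation, a genuinely positive term could be annihilated), and $T_3$ would then misclassify a $0/1$ boundary value as $\frac12$, breaking the correspondence with the crisp clauses of Definition~\ref{adjacent}. The only other facts needed are routine: commutativity and associativity of $\circledcirc$, the bound $x\circledcirc y\le\min\{x,y\}$, and the endpoint behaviour $N(x)=1\Leftrightarrow x=0$ and $N(x)=0\Leftrightarrow x=1$ of the (standard) negation, which must be invoked wherever the $N(\cdot)$-factors are resolved.
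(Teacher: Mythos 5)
Your proposal is correct and takes essentially the same route as the paper's own proof: both verify the three biconditionals of Definition~\ref{adjacent} for the ternarized assignment by starting from the equational characterization of Theorem~\ref{thm7}, using the t-norm bound $x\circledcirc y\le\min\{x,y\}$ to resolve products equal to $1$, zero-divisor freeness to localize vanishing products to a single factor, and dispatching the $\frac{1}{2}$ case as the residual ``otherwise'' clause. Your explicit treatment of the empty-product case and of the endpoint behaviour of the negation $N$ merely makes precise what the paper's proof uses implicitly.
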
 

\begin{proof}
	For a given $HAFS$, for any $a\in U$, let $\{b_1, b_2, \dots, b_k\}$ be the set of all attackers of $a$ and $\{c_1, \dots , c_m \}$ be the set of all (necessary) supporters of $a$. If an assignment $\|\cdot\|$ is a model of the $HAFS$ under the $Eq_\circledcirc^{ec_{HN}}$, then denoting $T_3(\|\cdot\|)$ by $\|\cdot\|_3$, we have  
	\begin{equation*}
		\|a\|_3=
		\begin{cases}
			1 & \text{iff } \|a\|=1, \\
			0 & \text{iff } \|a\|=0,\\
			\frac{1}{2} & \text{otherwise}.
		\end{cases}
	\end{equation*}
	Then we need to discuss three cases.
	\begin{itemize}
		\item Case 1, $\|a\|_3=0$.\\
		$\|a\|_3=0$
		\\$\Longleftrightarrow$ $\|a\|=0$ 
		\\$\Longleftrightarrow$ $N(\|b_1\|\circledcirc\|r_{b_1}^a\|)\circledcirc\dots\circledcirc N(\|b_k\|\circledcirc\|r_{b_k}^a\|)\circledcirc N(N(\|c_1\|)\circledcirc\|t_{c_1}^a\|)\circledcirc\dots\circledcirc N(N(\|c_m\|)\circledcirc\|t_{c_m}^a\|)=0$
		\\$\Longleftrightarrow$  
		$\exists (b,a)\in R: N(\|b\|\circledcirc\|r_{b}^a\|)=0$ or $\exists (c,a)\in T: N(N(\|c\|)\circledcirc\|t_{c}^a\|)=0$
		\\$\Longleftrightarrow$
		$\exists (b,a)\in R: \|b\|\circledcirc\|r_{b}^a\|=1$ or $\exists (c,a)\in T: N(\|c\|)\circledcirc\|t_{c}^a\|=1$
		\\$\Longleftrightarrow$
		$\exists (b,a)\in R: \|b\|=\|r_{b}^a\|=1$ or $\exists (c,a)\in T: \|c\|=0$ and $\|t_{c}^a\|=1$
		\\$\Longleftrightarrow$
		$\exists (b,a)\in R: \|b\|_3=\|r_{b}^a\|_3=1$ or $\exists (c,a)\in T: \|c\|_3=0$ and $\|t_{c}^a\|_3=1$
		\\$\Longleftrightarrow$ for the assignment $\|\cdot\|_3$, $\|a\|_3=0$ satisfies complete semantics.
		
		\item Case 2, $\|a\|_3=1$.\\
		$\|a\|_3=1$
		\\$\Longleftrightarrow$ $\|a\|=1$
		\\$\Longleftrightarrow$ $N(\|b_1\|\circledcirc\|r_{b_1}^a\|)\circledcirc\dots\circledcirc N(\|b_k\|\circledcirc\|r_{b_k}^a\|)\circledcirc N(N(\|c_1\|)\circledcirc\|t_{c_1}^a\|)\circledcirc\dots\circledcirc N(N(\|c_m\|)\circledcirc\|t_{c_m}^a\|)=1$
		\\$\Longleftrightarrow$ 
		$N(\|b_1\|\circledcirc\|r_{b_1}^a\|)=\dots= N(\|b_k\|\circledcirc\|r_{b_k}^a\|)= N(N(\|c_1\|)\circledcirc\|t_{c_1}^a\|)=\dots= N(N(\|c_m\|)\circledcirc\|t_{c_m}^a\|)=1$
		\\$\Longleftrightarrow$ 
		$\|b_1\|\circledcirc\|r_{b_1}^a\|=\dots= \|b_k\|\circledcirc\|r_{b_k}^a\|= N(\|c_1\|)\circledcirc\|t_{c_1}^a\|=\dots= N(\|c_m\|)\circledcirc\|t_{c_m}^a\|=0$
		\\$\Longleftrightarrow$ 		
		$[\forall (b,a)\in R: \|b\|=0 \text{ or } \|r_{b}^a\|=0] \text{ and } [\forall (c,a)\in T: \|c\|=1 \text{ or } \|t_{c}^a\|=0]$
		\\$\Longleftrightarrow$ 		
		$[\forall (b,a)\in R: \|b\|_3=0 \text{ or } \|r_{b}^a\|_3=0] \text{ and } [\forall (c,a)\in T: \|c\|_3=1 \text{ or } \|t_{c}^a\|_3=0]$		
		\\$\Longleftrightarrow$ for the assignment $\|\cdot\|_3$, $\|a\|_3=1$ satisfies complete semantics.
		
		\item Case 3, $\|a\|_3=\frac{1}{2}$.\\
		$\|a\|_3=\frac{1}{2}$
		\\$\Longleftrightarrow$ not the case $\|a\|_3=1$ or $\|a\|_3=0$
		\\$\Longleftrightarrow$ for the assignment $\|\cdot\|_3$, neither $\|a\|_3=1$ nor $\|a\|_3=0$ satisfies the complete semantics
		\\$\Longleftrightarrow$ for the assignment $\|\cdot\|_3$, $\|a\|_3=\frac{1}{2}$ satisfies complete semantics.
	\end{itemize}
	From the three cases, for a given model $\|\cdot\|$ of an $HAFS$ under $Eq_\circledcirc^{ec_{HN}}$, $T_3(\|\cdot\|)$ is a model of the $HAFS$ under the complete semantics.
\end{proof}

This theorem shows that each model of an $HAFS$ under an encoded semantics $Eq_\circledcirc^{ec_{HN}}$ can be turned into a model of the $HAFS$ under the complete semantics by the ternarization function $T_3$. Since $\mathcal{PL}_{[0,1]}^P$ is a particular $\mathcal{PL}_{[0,1]}^\circledcirc$ and equational semantics $Eq^P$ is a special encoded semantics $Eq_\circledcirc^{ec_{HN}}$, from Theorem \ref{thm16}, we have the corollary below.
\begin{cor}
	For an $HAFS$ and the encoded semantics $Eq^P$, if an assignment $\|\cdot\|$ is a model of the $HAFS$ under the $Eq^P$, then $T_3(\|\cdot\|)$ is a model of the $HAFS$ under the complete semantics.
\end{cor}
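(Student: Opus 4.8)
The plan is to derive this corollary as a direct specialization of Theorem \ref{thm16}, so the entire argument reduces to verifying that the Product fuzzy logic $\mathcal{PL}_{[0,1]}^P$ is a legitimate instance of the generic $\mathcal{PL}_{[0,1]}^\circledcirc$ to which Theorem \ref{thm16} applies. The single hypothesis that Theorem \ref{thm16} imposes on the t-norm $\circledcirc$ is that it be \emph{zero-divisor free}, so the only thing I would actually check is that the Product t-norm $T_P$ satisfies this property.

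First I would recall that a t-norm $\circledcirc$ is zero-divisor free iff $m \circledcirc n = 0$ implies $m = 0$ or $n = 0$, for all $m, n \in [0,1]$. For the Product t-norm $T_P(m,n) = m \cdot n$, the elementary fact that a product of two reals in $[0,1]$ vanishes precisely when one factor vanishes gives immediately that $m \cdot n = 0 \Longleftrightarrow m = 0 \text{ or } n = 0$. Hence $T_P$ is zero-divisor free. This is the crux of the whole proof, although it is entirely routine; there is no genuine obstacle here, since the Product t-norm is the canonical textbook example of a zero-divisor-free (strict) continuous t-norm.

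With that established, I would observe that $\mathcal{PL}_{[0,1]}^P$ is exactly the system $\mathcal{PL}_{[0,1]}^\circledcirc$ obtained by instantiating $\circledcirc := T_P$, the standard negation $N(m) = 1 - m$, and the R-implication $I_P = I_\circledcirc$. Correspondingly, the equational system $Eq^P$ is precisely the encoded semantics $Eq_\circledcirc^{ec_{HN}}$ for this choice of $\circledcirc$, as already confirmed by the preceding theorem that establishes model equivalence between $Eq^P$ and the fuzzy normal encoded semantics associated with $\mathcal{PL}_{[0,1]}^P$. Thus any model $\|\cdot\|$ of the $HAFS$ under $Eq^P$ is a model under $Eq_\circledcirc^{ec_{HN}}$ with $\circledcirc = T_P$.

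Finally, I would simply invoke Theorem \ref{thm16}: since $T_P$ is zero-divisor free and $\|\cdot\|$ is a model of the $HAFS$ under $Eq_{T_P}^{ec_{HN}} = Eq^P$, it follows that $T_3(\|\cdot\|)$ is a model of the $HAFS$ under the complete semantics, which is exactly the claim. No separate case analysis is needed, because the ternarization argument (the three cases $\|a\|_3 \in \{0, 1, \tfrac{1}{2}\}$) has already been carried out in full generality inside the proof of Theorem \ref{thm16}, and the zero-divisor-free hypothesis is the only place where the specific nature of $\circledcirc$ enters that proof.
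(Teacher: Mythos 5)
Your proposal is correct and takes essentially the same route as the paper, which also obtains the corollary as an immediate specialization of Theorem~\ref{thm16}, observing that $\mathcal{PL}_{[0,1]}^P$ is a particular $\mathcal{PL}_{[0,1]}^\circledcirc$ and $Eq^P$ a special $Eq_\circledcirc^{ec_{HN}}$. Your explicit check that $T_P$ is zero-divisor free (i.e., $m\cdot n=0$ iff $m=0$ or $n=0$ on $[0,1]$) merely spells out what the paper leaves implicit, so no further argument is needed.
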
 
\subsubsection{The relationship (II) between complete semantics and $Eq^{ec_{HN}}$ with $\frac{1}{2}$-idempotent t-norms}
Next we explore the situation that a model of an $HAFS$ under complete semantics is a model of the $HAFS$ under a characterized encoded semantics. We give the characterized t-norm below.
\begin{defn}
	An \emph{$\frac{1}{2}$-idempotent t-norm} is a t-norm $\odot$ that satisfies $\frac{1}{2}\odot \frac{1}{2}=\frac{1}{2}$.
\end{defn}
Suppose that the $\mathcal{PL}_{[0,1]}^\odot$ in Theorem \ref{thm7} is equipped with a standard negation, a $\frac{1}{2}$-idempotent t-norm $\odot$ and an R-implication $I_\odot$. We denote the corresponding encoded semantics by $Eq_\odot^{ec_{HN}}$: $\forall a\in U$,
\begin{equation}\label{odot}
	\|a\| =\mathbf{N_b^\odot}\odot \mathbf{N_c^\odot},
\end{equation}
where $\mathbf{N_b^\odot}=N(\|b_1\|\odot\|r_{b_1}^a\|)\odot\dots\odot N(\|b_k\|\odot\|r_{b_k}^a\|)$, $\mathbf{N_c^\odot}=N(N(\|c_1\|)\odot\|t_{c_1}^a\|)\odot\dots\odot N(N(\|c_m\|)\odot\|t_{c_m}^a\|)$, $\{b_1, \dots , b_k \}$ is the set of all attackers of $a$, and $\{c_1, \dots , c_m \}$ is the set of all (necessary) supporters of $a$. 

\begin{thm}\label{idem}
	For an $HAFS$ and an encoded semantics $Eq_\odot^{ec_{HN}}$, if an assignment $\|\cdot\|$ is a model of the $HAFS$ under the complete semantics, then $\|\cdot\|$ is a model of the $HAFS$ under the $Eq_\odot^{ec_{HN}}$.
\end{thm}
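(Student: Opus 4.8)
The plan is to verify directly that a complete-semantics model $\|\cdot\|$, whose values all lie in $\{0,\frac12,1\}$, satisfies Equation \ref{odot} for every $a\in U$. I would split into the three cases $\|a\|=1$, $\|a\|=0$, and $\|a\|=\frac12$ dictated by Definition \ref{adjacent}, and in each case evaluate the right-hand side $\mathbf{N_b^\odot}\odot\mathbf{N_c^\odot}$. Throughout I would rely on the standard t-norm facts that $1$ is neutral ($1\odot x=x$) and $0$ is absorbing ($0\odot x=0$), together with the standard negation values $N(0)=1$, $N(1)=0$, $N(\frac12)=\frac12$. This mirrors the three-case structure of the proof of Theorem \ref{thm16}, but run in the opposite direction, from the complete labelling to the equational solution.

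The two extreme cases are routine. For $\|a\|=1$, the complete-semantics conditions give $\|b\|\odot\|r_b^a\|=0$ for every attacker (one factor is $0$, which absorbs) and $N(\|c\|)\odot\|t_c^a\|=0$ for every supporter (either $N(\|c\|)=N(1)=0$ or $\|t_c^a\|=0$); applying $N$ turns every factor into $1$, so $\mathbf{N_b^\odot}$ and $\mathbf{N_c^\odot}$ are $\odot$-products of $1$'s, whence $\mathbf{N_b^\odot}\odot\mathbf{N_c^\odot}=1=\|a\|$. For $\|a\|=0$, some attacker has $\|b\|=\|r_b^a\|=1$, so $\|b\|\odot\|r_b^a\|=1$ and the corresponding factor is $N(1)=0$; or some supporter has $N(\|c\|)=\|t_c^a\|=1$, again producing a $0$ factor. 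Since $0$ absorbs, the whole product collapses to $0=\|a\|$.

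The crux is $\|a\|=\frac12$, and it is here that $\frac12$-idempotency ($\frac12\odot\frac12=\frac12$) is needed. I would isolate two structural facts from the failure of the clauses for $\|a\|=1$ and $\|a\|=0$. First, failure of the $\|a\|=0$ clause forces every factor into $\{\frac12,1\}$: for each attacker, $\|b\|$ and $\|r_b^a\|$ are not both $1$, so $\|b\|\odot\|r_b^a\|\in\{0,\frac12\}$ (using $1\odot\frac12=\frac12$ and $\frac12\odot\frac12=\frac12$), and applying $N$ yields a factor in $\{1,\frac12\}$; the supporter factors behave identically once one notes $N(\|c\|)\in\{0,\frac12,1\}$ and $N(\|c\|),\|t_c^a\|$ are not both $1$. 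Second, failure of the $\|a\|=1$ clause forces at least one factor to equal $\frac12$: some attacker has $\|b\|,\|r_b^a\|\neq0$, hence both in $\{\frac12,1\}$ but not both $1$, so their product is $\frac12$; or symmetrically some supporter yields a $\frac12$ factor. Since $1$ is neutral, I may delete all $1$-valued factors, leaving a nonempty $\odot$-product of $\frac12$'s, which equals $\frac12$ by repeated use of $\frac12$-idempotency and associativity; thus $\mathbf{N_b^\odot}\odot\mathbf{N_c^\odot}=\frac12=\|a\|$. The main obstacle is exactly this bookkeeping in the $\frac12$-case — extracting the two facts ``all factors in $\{\frac12,1\}$'' and ``at least one factor is $\frac12$'' from the negations of the two labelling clauses, and confirming that $\frac12$-idempotency is precisely the hypothesis making the middle value stable under $\odot$.
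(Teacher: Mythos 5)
Your proposal is correct and follows essentially the same route as the paper's proof: a three-case verification over the value set $\{0,\frac{1}{2},1\}$, with the two extreme cases handled by neutrality of $1$ and absorption by $0$, and with $\frac{1}{2}$-idempotency invoked exactly where the middle value must be stable under $\odot$. The only presentational difference is in the $\|a\|=\frac{1}{2}$ case, where you compute the product directly (all factors in $\{\frac{1}{2},1\}$ and at least one factor equal to $\frac{1}{2}$), whereas the paper deduces $\mathbf{N_b^\odot}\odot\mathbf{N_c^\odot}=\frac{1}{2}$ by exclusion from the biconditionals established in its first two cases together with closure of $\{0,\frac{1}{2},1\}$ under $\odot$ --- both arguments rest on the same underlying facts.
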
 

\begin{proof}
	For a given $HAFS$ and for any $a\in U$, let $\{b_1, \dots , b_k \}$ be the set of all attackers of $a$ and $\{c_1, \dots , c_m \}$ be the set of all (necessary) supporters of $a$. If an assignment $\|\cdot\|$ is a model of the $HAFS$ under the complete semantics, then we have
	\begin{itemize}
		\item 	$\|a\|=1$ iff [$\forall (b,a)\in R: \|b\|=0$ or $\|r_{b}^a\|=0$] and [$\forall (c,a)\in T: \|c\|=1$ or $\|t_{c}^a\|=0$];
		\item$\|a\|=0$ iff [$\exists (b,a)\in R: \|b\|=1$ and $\|r_{b}^a\|=1$] or [$\exists (c,a)\in T: \|c\|=0$ and $\|t_{c}^a\|=1$];
		\item $\|a\|= \frac{1}{2}$ iff otherwise.
	\end{itemize}
	Then we need to discuss three cases.
	\begin{itemize}
		\item Case 1, $\|a\|=0$.\\
		$\|a\|=0$
		\\$\Longleftrightarrow$ $[\exists (b,a)\in R: \|b\|=1 \text{ and } \|r_{b}^a\|=1] \text{ or } [\exists (c,a)\in T: \|c\|=0 \text{ and } \|t_{c}^a\|=1]$
		\\$\Longleftrightarrow$ $[\exists (b,a)\in R: \|b\|\odot\|r_{b}^a\|=1] \text{ or } [\exists (c,a)\in T: N(\|c\|)\odot\|t_{c}^a\|=1]$
		\\$\Longleftrightarrow$ $[\exists (b,a)\in R: N(\|b\|\odot\|r_{b}^a\|)=0] \text{ or } [\exists (c,a)\in T: N(N(\|c\|)\odot\|t_{c}^a\|)=0]$		
		\\$\Longleftrightarrow$ $\mathbf{N_b^\odot}=N(\|b_1\|\odot\|r_{b_1}^a\|)\odot\dots\odot N(\|b_k\|\odot\|r_{b_k}^a\|)=0$ or $\mathbf{N_c^\odot}=N(N(\|c_1\|)\odot\|t_{c_1}^a\|)\odot\dots\odot N(N(\|c_m\|)\odot\|t_{c_m}^a\|)=0$ (since $\odot$ is a $\frac{1}{2}$-idempotent t-norm and the set of values of all elements in $U$ is $\{0,1,\frac{1}{2}\}$) 
		\\$\Longleftrightarrow$
		$\mathbf{N_b^\odot}\odot\mathbf{N_c^\odot}=0$
		\\$\Longleftrightarrow$ 
		$\|a\|=\mathbf{N_b^\odot}\odot\mathbf{N_c^\odot}$, i.e., Equation \ref{odot} holds under this case.
		\item Case 2, $\|a\|=1$.\\
		$\|a\|=1$
		\\$\Longleftrightarrow$ $[\forall (b,a)\in R: \|b\|=0 \text{ or } \|r_{b}^a\|=0] \text{ and } [\forall (c,a)\in T: \|c\|=1 \text{ or } \|t_{c}^a\|=0]$
		\\$\Longleftrightarrow$ $[\forall (b,a)\in R: \|b\|\odot\|r_{b}^a\|=0] \text{ and } [\forall (c,a)\in T: N(\|c\|)\odot\|t_{c}^a\|=0]$
		\\$\Longleftrightarrow$ $[\forall (b,a)\in R: N(\|b\|\odot\|r_{b}^a\|)=1] \text{ and } [\forall (c,a)\in T: N(N(\|c\|)\odot\|t_{c}^a\|)=1]$
		\\$\Longleftrightarrow$ $\mathbf{N_b^\odot}=N(\|b_1\|\odot\|r_{b_1}^a\|)\odot\dots\odot N(\|b_k\|\odot\|r_{b_k}^a\|)=1$ and $\mathbf{N_c^\odot}=N(N(\|c_1\|)\odot\|t_{c_1}^a\|)\odot\dots\odot N(N(\|c_m\|)\odot\|t_{c_m}^a\|)=1$
		\\$\Longleftrightarrow$
		$\mathbf{N_b^\odot}\odot\mathbf{N_c^\odot}=1$
		\\$\Longleftrightarrow$ 
		$\|a\|=\mathbf{N_b^\odot}\odot\mathbf{N_c^\odot}$, i.e., Equation \ref{odot} holds under this case.
		\item Case 3, $\|a\|=\frac{1}{2}$.\\
		$\|a\|=\frac{1}{2}$
		\\$\Longleftrightarrow$ $\|a\|\neq0$ and $\|a\|\neq1$ 
		\\$\Longleftrightarrow$ from Case 1 and Case 2, $\mathbf{N_b^\odot}\odot\mathbf{N_c^\odot}\neq 0$ and $\mathbf{N_b^\odot}\odot\mathbf{N_c^\odot}\neq 1$
		\\$\Longleftrightarrow$ $\mathbf{N_b^\odot}\odot\mathbf{N_c^\odot}=\frac{1}{2}$
		\\$\Longleftrightarrow$ 
		$\|a\|=\mathbf{N_b^\odot}\odot\mathbf{N_c^\odot}$, i.e., Equation \ref{odot} holds under this case.		
	\end{itemize}
	From the three cases, for a given model $\|\cdot\|$ of an $HAFS$ under the complete semantics, $\|\cdot\|$ is a model of the $HAFS$ under the $Eq_\odot^{ec_{HN}}$.
\end{proof}
\subsubsection{The relationship (III) between complete semantics and $Eq^{ec_{HN}}$ with zero-divisor free $\frac{1}{2}$-idempotent t-norms}
Suppose that the $\mathcal{PL}_{[0,1]}^\circleddash$ in Theorem \ref{thm7} is equipped with a standard negation, a $\frac{1}{2}$-idempotent t-norm without zero divisors and an R-implication. We denote the associated encoded semantics by $Eq_\circleddash^{ec_{HN}}$. Then from Theorem \ref{thm16} and Theorem \ref{idem}, we have a corollary below.
\begin{cor}
	For an $HAFS$, we have:
	\begin{align*}
			&\Big\{\, \|\cdot\| \ \Big|\ \|\cdot\| \text{ is a model under complete semantics} \Big\} \\
		= &\Big\{\, \|\cdot\|_3 \ \Big|\ \|\cdot\| \text{ is a model under } Eq_{\circleddash}^{ec_{HN}} \Big\}.
	\end{align*}
\end{cor}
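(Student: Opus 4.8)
The plan is to establish the asserted set equality by proving the two inclusions separately, drawing on the fact that the t-norm $\circleddash$ is simultaneously zero-divisor-free and $\frac{1}{2}$-idempotent, so that both Theorem~\ref{thm16} and Theorem~\ref{idem} are applicable to the encoded semantics $Eq_\circleddash^{ec_{HN}}$. The key auxiliary observation I would record first is that the ternarization $T_3$ acts as the identity on any assignment already valued in $\{0,1,\frac{1}{2}\}$: since $T_3$ sends $1\mapsto 1$, $0\mapsto 0$, and every other value to $\frac{1}{2}$, we have $T_3(\|\cdot\|)=\|\cdot\|$ whenever $\|\cdot\|$ takes values only in $\{0,1,\frac{1}{2}\}$. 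This idempotence is what upgrades the two one-directional theorems into an exact equality rather than a mere pair of containments.

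For the inclusion $\supseteq$, I would take an arbitrary element $\|\cdot\|_3$ of the right-hand set, so that $\|\cdot\|$ is a model of the $HAFS$ under $Eq_\circleddash^{ec_{HN}}$. Because $\circleddash$ is zero-divisor-free, $Eq_\circleddash^{ec_{HN}}$ is a particular instance of $Eq_\circledcirc^{ec_{HN}}$, and Theorem~\ref{thm16} immediately yields that $T_3(\|\cdot\|)=\|\cdot\|_3$ is a model under the complete semantics. Hence $\|\cdot\|_3$ belongs to the left-hand set.

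For the inclusion $\subseteq$, I would take an arbitrary model $\|\cdot\|$ under the complete semantics; by Definition~\ref{adjacent} its values lie in $\{0,1,\frac{1}{2}\}$. Because $\circleddash$ is $\frac{1}{2}$-idempotent, $Eq_\circleddash^{ec_{HN}}$ is an instance of $Eq_\odot^{ec_{HN}}$, so Theorem~\ref{idem} gives that the \emph{same} assignment $\|\cdot\|$ is a model under $Eq_\circleddash^{ec_{HN}}$. Invoking the idempotence observation, $\|\cdot\|_3=T_3(\|\cdot\|)=\|\cdot\|$, so $\|\cdot\|$ is exhibited as $\|\cdot\|_3$ for a model $\|\cdot\|$ of $Eq_\circleddash^{ec_{HN}}$, placing it in the right-hand set. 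Combining the two inclusions completes the proof.

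I do not anticipate a genuine obstacle here, since the result is a direct synthesis of the two preceding theorems; the only point requiring care is the identity-on-three-valued-assignments property of $T_3$, which must be stated explicitly so that the inclusion $\subseteq$ produces the precise representative $\|\cdot\|=\|\cdot\|_3$ rather than merely a complete-semantics model that happens to arise from some $Eq_\circleddash^{ec_{HN}}$ solution. Making that fixed-point remark the first step keeps both inclusions short and transparent.
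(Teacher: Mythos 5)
Your proposal is correct and takes essentially the same route as the paper, which states this corollary as an immediate consequence of Theorem~\ref{thm16} (applicable because $\circleddash$ is zero-divisor free) and Theorem~\ref{idem} (applicable because $\circleddash$ is $\frac{1}{2}$-idempotent), i.e., exactly the two inclusions you prove. Your explicit observation that $T_3$ acts as the identity on $\{0,1,\frac{1}{2}\}$-valued assignments is the one small detail the paper leaves implicit, and you are right that it is needed to turn the $\subseteq$ direction into membership in the right-hand set.
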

Since $\mathcal{PL}_{[0,1]}^G$ is a particular $\mathcal{PL}_{[0,1]}^\circleddash$ and equational semantics $Eq^G$ is a special encoded semantics $Eq_\circleddash^{ec_{HN}}$, we have the corollary below.
\begin{cor}
	For an $HAFS$, we have:
	\begin{align*}
		&\Big\{\, \|\cdot\| \ \Big|\ \|\cdot\| \text{ is a model under complete semantics} \Big\} \\
		= &\Big\{\, \|\cdot\|_3 \ \Big|\ \|\cdot\| \text{ is a model under } Eq^G \Big\}.
	\end{align*}
\end{cor}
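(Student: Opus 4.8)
The plan is to obtain this corollary as a direct specialization of the immediately preceding one, which already establishes the desired set equality for every encoded semantics $Eq_\circleddash^{ec_{HN}}$ whose underlying $\mathcal{PL}_{[0,1]}^\circleddash$ is equipped with the standard negation, a zero-divisor-free $\frac{1}{2}$-idempotent t-norm, and an R-implication. Thus the only work is to certify that $\mathcal{PL}_{[0,1]}^G$ is an instance of $\mathcal{PL}_{[0,1]}^\circleddash$ and, consequently, that $Eq^G$ is an instance of $Eq_\circleddash^{ec_{HN}}$.

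First I would recall from the preliminaries that $\mathcal{PL}_{[0,1]}^G$ is by definition equipped with the standard negation $N(m)=1-m$, the G\"{o}del t-norm $T_G(m,n)=\min\{m,n\}$, and its R-implication $I_G$; hence the negation and implication components already match the hypotheses placed on $\circleddash$, and it remains only to check the two t-norm properties. For $\frac{1}{2}$-idempotency, I would observe that $T_G(\frac{1}{2},\frac{1}{2})=\min\{\frac{1}{2},\frac{1}{2}\}=\frac{1}{2}$, so $T_G$ is $\frac{1}{2}$-idempotent. For the absence of zero divisors, I would note that $T_G(m,n)=\min\{m,n\}=0$ forces $m=0$ or $n=0$; therefore there exist no $m,n\in(0,1]$ with $T_G(m,n)=0$, i.e., $T_G$ is zero-divisor-free.

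Having verified both conditions, I would conclude that $T_G$ is a zero-divisor-free $\frac{1}{2}$-idempotent t-norm, so $\mathcal{PL}_{[0,1]}^G$ is a particular $\mathcal{PL}_{[0,1]}^\circleddash$ and $Eq^G$ is a particular $Eq_\circleddash^{ec_{HN}}$. Substituting this instance into the preceding corollary yields exactly the claimed equality between the set of models under complete semantics and the set of ternarizations $\|\cdot\|_3$ of models under $Eq^G$. I do not anticipate any genuine obstacle: the entire argument is a definitional matching, and the single point requiring a moment's care is confirming that the specific R-implication $I_G$ of $\mathcal{PL}_{[0,1]}^G$ is permitted — but since the preceding corollary quantifies over an arbitrary R-implication paired with the t-norm, $I_G$ is automatically admissible.
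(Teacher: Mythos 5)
Your proposal is correct and takes essentially the same route as the paper: the paper derives this corollary by observing that $\mathcal{PL}_{[0,1]}^G$ is a particular $\mathcal{PL}_{[0,1]}^\circleddash$ and hence $Eq^G$ a special case of $Eq_\circleddash^{ec_{HN}}$, then specializing the immediately preceding corollary. The only difference is that you explicitly verify the instance conditions (standard negation, $T_G(\tfrac{1}{2},\tfrac{1}{2})=\tfrac{1}{2}$, and $\min\{m,n\}=0$ forcing $m=0$ or $n=0$), which the paper asserts without spelling out.
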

This establishes an exact correspondence between the complete semantics and the encoded semantics \(Eq_\circleddash^{ec_{HN}}\)
\section{Conclusion}
This paper addresses critical limitations in existing higher-order bipolar $AF$s and their logical encodings, proposing a systematic solution through the $HAFS$ and its associated semantic and encoding methodologies. Below is a concise summary of the research contributions, theoretical implications, and future directions:

\subsection{Summary of Core Contributions}
First, at the syntactic level, we extend the $HOAFN$ to the $HAFS$, breaking the long-standing restriction that only arguments can act as attackers or supporters. By explicitly allowing attacks and supports to serve as both targets and sources of interactions (i.e., $U = A \cup R \cup T$ and $R, T \subseteq U \times U$), the $HAFS$ enables modeling of complex dialectical scenarios where higher-order relations (attacks on attacks, supports for supports) are essential, enhancing both theoretical uniformity and real-world applicability.

Second, in terms of semantics, we establish a coherent and expressive semantic suite for $HAFS$s, encompassing three complementary types: extension-based semantics (conflict-free, admissible, complete, preferred, stable, and grounded extensions), adjacent complete labelling semantics (a 3-valued semantics with labels $0, 1, \frac{1}{2}$), and numerical equational semantics ($[0,1]$-valued fuzzy semantics). This suite unifies discrete three-valued reasoning and continuous fuzzy reasoning, addressing the gap between multi-valued and fuzzy semantics in existing higher-order bipolar $AF$s. Key theoretical results include the equivalence between extension-derived complete labellings and adjacent complete labellings for support-acyclic $HAFS$s (Theorem 2), and the mutual translatability between 3-valued complete semantics and fuzzy equational semantics under specific t-norms (Theorems 8–9 and Corollaries 4–5).

Third, regarding logical encoding, we develop a normal encoding function $ec_{HN}$ that bridges $HAFS$s to propositional logic systems ($\mathcal{PLS}$s). We prove that $HAFS$s under adjacent complete labelling semantics are semantically equivalent to Łukasiewicz’s three-valued logic ($\mathcal{PL}_3^L$) theories (Theorem 3), and $HAFS$s under equational semantics can be encoded into fuzzy $\mathcal{PLS}$s (e.g., $\mathcal{PL}_{[0,1]}^G$, $\mathcal{PL}_{[0,1]}^P$, $\mathcal{PL}_{[0,1]}^L$) via equational systems (Theorems 4, 6, 7). This encoding methodology avoids the syntactic overhead of first-order or modal logics, enabling seamless interoperability with lightweight computational solvers and providing a rigorous logical foundation for $HAFS$ semantics.

\subsection{Theoretical and Practical Implications}
Theoretically, this work advances the formalization of higher-order argumentation by: (1) generalizing $RAFN$ semantics to $HAFS$s, ensuring that the validity of attacks/supports is independent of their sources (aligning with classical logical intuitions); (2) resolving the treatment of support cycles by allowing extensions to include elements in valid support cycles (contrasting with existing approaches that outright reject such elements); (3) establishing a unified framework for linking discrete and continuous argumentation semantics, facilitating cross-paradigm generalization.

Practically, the proposed encoding methodology translates $HAFS$s into computationally tractable logical formulas, enabling the use of existing $\mathcal{PLS}$ solvers to compute argument acceptability. This overcomes the impracticality of complex logical encodings (e.g., first-order or modal logics) in real-world applications, making higher-order bipolar argumentation accessible for scenarios such as multi-agent reasoning, legal dispute resolution, and risk assessment.

\subsection{Limitations and Future Work}
Despite its contributions, this work has several limitations that merit further exploration. For example, this study focuses on static $HAFS$s; dynamic extensions (e.g., adding/removing arguments/relations) and their logical encodings remain an open issue.

Future work may proceed in three directions: (1) extending the $HAFS$ to handle dynamic changes and weighted relations, with applications to real-time decision-making scenarios; (2) conducting empirical evaluations to validate the computational efficiency of the proposed encoding methodology using $\mathcal{PLS}$ solvers (e.g., fuzzy logic toolboxes or three-valued logic reasoners); (3) exploring cross-fertilization with other logical systems (e.g., adaptive logics or paraconsistent logics) to enhance the handling of inconsistency and uncertainty in complex argumentation.

In conclusion, this paper lays the groundwork for a unified theory of higher-order bipolar argumentation, bridging syntactic expressiveness, semantic coherence, and logical computability. The $HAFS$ and its encoding methodologies not only address foundational gaps in argumentation theory but also provide a practical tool for modeling and reasoning about complex, uncertain dialectical interactions.



\bibliographystyle{elsarticle-num} 
\bibliography{ref}



\end{document}